\def\thesection{\arabic{section}}
\def\theequation{\thesection.\arabic{equation}}
\newcommand{\ds} {\displaystyle}
\newcommand{\e}{\epsilon}
\newcommand{\al} {\alpha}
\newcommand{\ba} {\beta}
\newcommand{\ga} {\gamma}
\newcommand{\Om} {\Omega}
\newcommand{\ra} {\rightarrow}
\newcommand{\De} {\Delta}
\newcommand{\la} {\lambda}
\newcommand{\La} {\Lambda}
\newcommand{\noi} {\noindent}
\newcommand{\mb} {\mathbb}
\newcommand{\mc} {\mathcal}
\newcommand{\io} {\int_{\Om}}
\newcommand{\h}{X_0}
\newcommand{\aw}{a(x)w_{+}^{1-q}(x) dx}
\newcommand{\bw}{b(x)w_{+}^{r+1}(x) dx}
\newcommand{\ak}{a(x)(w_{k})_{+}^{1-q}(x) dx}
\newcommand{\bk}{b(x)(w_{k})_{+}^{r+1}(x) dx}
\def\theequation{\@arabic{\c@section}.\@arabic{\c@equation}}
\def\QED{\hfill {$\square$}\goodbreak \medskip}
\newtheorem{Theorem}{Theorem}[section]
\newtheorem{Lemma}[Theorem]{Lemma}
\newtheorem{Corollary}[Theorem]{Corollary}
\newtheorem{Definition}[Theorem]{Definition}
\begin{document}
{\vspace{0.01in}}

\title
{Multiplicity results of fractional $p$-Laplace equations with sign-changing and singular nonlinearity}

\author{
{\bf  Sarika Goyal\footnote{email: sarika1.iitd@gmail.com}} \\
{\small Department of Mathematics}, \\{\small Indian Institute of Technology Delhi}\\
{\small Hauz Khas}, {\small New Delhi-16, India.}\\
 }
\date{}

\maketitle

\begin{abstract}

In this article, we study the following fractional $p$-Laplacian equation with singular nonlinearity
 \begin{equation*}
 (P_{\la}) \left\{
\begin{array}{lr}
- 2\int_{\mb R^n}\frac{|w(y)-w(x)|^{p-2}(w(y)-w(x))}{|x-y|^{n+ps}}dy =  a(x) w^{-q}+ \la b(x) w^r\; \text{in}\;
\Om \\
 \quad \quad w>0\;\text{in}\;\Om, \quad w = 0 \; \mbox{in}\; \mb R^n \setminus\Om,\\
\end{array}
\quad \right.
\end{equation*}
where $\Om$ is a bounded domain in $\mb R^n$ with smooth
boundary $\partial \Om$, $n> ps$,$s\in(0,1)$, $\la>0$, $0<q<1$, $q<p-1<r< p_{s}^*-1$ with $p_{s}^*=\frac{np}{n-ps}$, $a: \Om\subset\mb R^n \ra \mb R$ such that $0< a(x)\in L^{\frac{p^{*}_{s}}{p^{*}_{s}-1+q}}(\Om)$, and $b:\Om\subset\mb R^n \ra \mb R$ is a sign-changing function such that $b(x)\in L^{\frac{p^{*}_{s}}{p^{*}_{s}-1-r}}(\Om)$. Using variational methods, we show existence and
multiplicity of positive solutions of $(P_{\la})$ with respect to the parameter $\la$.
\medskip

\noi \textbf{Key words:} Non-local operator, singular nonlinearity, sign-changing weight function, Variational methods.

\medskip

\noi \textit{2010 Mathematics Subject Classification:} 35A15, 35J75, 35R11.
\end{abstract}

\bigskip
\vfill\eject

\section{Introduction}

\setcounter{equation}{0} Let $s\in (0,1)$ and let $0\in\Om\subset \mb
R^n$ is a bounded domain with smooth boundary, $n>ps$. Then we
consider the following problem with singular nonlinearity:
 \begin{equation*}
 (P_{\la}) \left\{
\begin{array}{lr}
 - 2\int_{\mb R^n}\frac{|w(y)-w(x)|^{p-2}(w(y)-w(x))}{|y|^{n+ps}}dy =  a(x) w^{-q}+ \la b(x) w^r\; \text{in}\;
\Om \\
 \quad \quad w>0\;\text{in}\;\Om, \quad w = 0 \; \mbox{in}\; \mb R^n \setminus\Om.\\
\end{array}
\quad \right.
\end{equation*}
 We assume the following assumptions on $a$ and $b:$
 \begin{enumerate}
  \item[(a1)] $a:   \Om\subset\mb R^n \ra \mb R$ such that $0< a\in L^{\frac{p^{*}_{s}}{p^{*}_{s}-1+q}}(\Om)$.
   \item[(b1)] $b:\Om\subset\mb R^n \ra \mb R$ is a sign-changing function such that $b^{+}\not\equiv 0$ and $b(x)\in L^{\frac{p^{*}_{s}}{p^{*}_{s}-1-r}}(\Om)$.
\end{enumerate}
Also $\la>0$ is a parameter and $0<q <1$, $q< p-1<r<p^{*}_{s}-1$, with $p^{*}_{s}= \frac{np}{n-ps}$, known as fractional critical Sobolev exponent.

\noi The fractional power of Laplacian is the infinitesimal generator of L\'{e}vy stable diffusion process and arise in anomalous diffusions in plasma, population dynamics, geophysical fluid dynamics, flames propagation, chemical reactions in liquids and American options in finance. For more details, one can see \cite{da, gm} and reference therein. Recently the fractional elliptic equation attracts a lot of interest in nonlinear analysis such as in \cite{cs, mp,var,bn1,bn}. Caffarelli and Silvestre \cite{cs} gave a new formulation of fractional Laplacian through Dirichlet-Neumann maps. This is commonly used in the literature since it allows us to write a nonlocal problem to a local problem which allow us to use the variational methods to study the existence and uniqueness.\\
\noi On the other hand, the fractional elliptic problem have been investigated by many authors, for example, \cite{mp, var} for subcritical case, \cite{bn1, bn} for critical case with polynomial type nonlinearities. Moreover, by Nehari manifold and fibering maps, the author obtained the existence of multiple solutions for fractional equations for critical \cite{zlj} and subcritical case \cite{ssa,ssi} and reference therein. In case of square root of Laplacian, existence and multiplicity results for sublinear and superlinear type of nonlinearity with sign-changing
weight functions is studied in \cite{xy}. In \cite{xy}, author used the idea of Caffarelli and Silvestre
\cite{cs}, which gives a formulation of the fractional Laplacian through Dirichlet-Neumann maps. Also in case of fractional $p$-Laplacian, existence and multiplicity results for polynomial type nonlinearities is studied by many authors see \cite{ssa,ssi,ls,lms,ms} and reference therein. Also eigenvalue problem related to $p-$fractional Laplacian is studied in \cite{gf, ll}.

\noi For $s=1$, the paper by Crandall, Robinowitz and Tartar \cite{cr} is the starting point on semilinear problem with singular nonlinearity. There is a large literature on singular nonlinearity see \cite{AJ, Am, cr, cp,dh, dhr, gr, gr1, JT, hss, hnc, lm,lmp} and reference therein. In \cite{yc}, Chen showed the existence and multiplicity of the following problem

\begin{equation*}
  \left\{
\begin{array}{lr}
-\De w -\frac{\la}{|x|^2}w=  \frac{f(x)}{w^q} + \mu g(x) w^p \; \text{in}\;
\Om\setminus\{0\} \\
 \quad w>0\;\text{in}\;\Om\setminus\{0\}, \quad w = 0 \; \mbox{in}\; \partial \Om.\\
\end{array}
\quad \right.
\end{equation*}
where $0\in \Om$ is a bounded smooth domain of $\mb R^n$ with smooth boundary, $0<\la<\frac{(n-2)^2}{4}$, $0<q<1<p<\frac{n+2}{n-2}$, $f(x)>0$ and $g$ is sign-changing continuous function.

\noi In \cite{fa}, Fang proved the existence of solution of the following singular problem
\[(-\De)^{s} w= w^{-p},\;w>0\;\mbox{in}\;\Om,  w=0 \text{in}\; \mb R^n\setminus\Om,\]
 with $0<p<1$, using the method of sub and super solution. Recently, in \cite{bb}, Barrios, Peral and et al. extend the result of \cite{fa}. They studied the existence result for the following fractional equation with singular type nonlinearities
 \begin{equation*}
  \left\{
\begin{array}{lr}
(-\De)^s w = \la\frac{f(x)}{w^\ga} + M w^p \; \text{in}\;
\Om \\
 \quad w>0\;\text{in}\;\Om, \quad w = 0 \; \mbox{in}\; \mb R^n \setminus\Om.\\
\end{array}
\quad \right.
\end{equation*}
where $\Om$ is a bounded smooth domain of $\mb R^n$, $n>2s$, $0<s<1$, $M\in\{0,1\}$, $\ga>0$, $\la>0$, $p>1$ and $f\in L^{m}(\Om)$, $m\geq 1$ is a nonnegative function. For $M=0$, they proved the existence of solution for every $\ga>0$ and $\la>0$. For $M=1$ and $f\equiv 1$, they showed that there exist $\La$ such that it has a solution for every $0<\la<\La$, and have no solution for $\la>\La$.

\noi To the best of our knowledge, there is no work related to fractional $p$-Laplacian with singular and sign-changing nonlinearity. In this work, we studied the multiplicity results for fractional $p$-Laplacian equation with singular nonlinearity and sign-changing weight function with respect to the parameter $\la$. This work is motivated by the work of Chen and Chen in \cite{yc}. But one can not directly extend all the results for fractional $p-$Laplacian, due to the non-local behavior of the operator and the bounded support of the test function is not preserved. Also due to the singularity of the problem, the associated functional is not differentiable in the sense of G\^{a}teaux.  The results obtained here are somehow expected but we show how the results arise out of nature of the Nehari manifold.

\noi The paper is organized as follows: Section 2 is devoted to some preliminaries and notations. we also state our main results. In section 3, we study the decomposition of Nehari manifold and the associated energy functional is bounded below and coercive. Section 3 contains the existence of a nontrivial
solutions in $\mc N_{\la}^{+}$ and $\mc N_{\la}^{-}$.

\noi We will use the following notation throughout this paper: $\|a\|$, $\|b\|$ denote the norm in $L^{\frac{p^{*}_{s}}{p^{*}_{s}-1+q}}(\Om)$, $L^{\frac{p^{*}_{s}}{p^{*}_{s}-1-r}}(\Om)$ respectively.
\section{Preliminaries:}
In this section we give some definitions and functional settings. At the end of this section, we state our main results. For this we define $W^{s,p}(\Om)$, the usual fractional Sobolev
space $W^{s,p}(\Om):= \left\{w\in L^{p}(\Om); \frac{(w(x)-w(y))}{|x-y|^{\frac{n}{p}+s}}\in L^{p}(\Om\times\Om)\right\}$ endowed with the norm
\begin{align}\label{2}
\|w\|_{W^{s,p}(\Om)}=\|w\|_{L^p}+ \left(\int_{\Om\times\Om}
\frac{|w(x)-w(y)|^{p}}{|x-y|^{n+ps}}dxdy \right)^{\frac 1p}.
\end{align}
To study fractional Sobolev space in details we refer \cite{hi}.

\noi Due to the non-localness of the operator, we define linear space as follows:
\[X= \left\{w|\;w:\mb R^n \ra\mb R \;\text{is measurable},
w|_{\Om} \in L^p(\Om)\;
 and\;  \frac{w(x)- w(y)}{|x-y|^{\frac{n+ps}{p}}}\in
L^p(Q)\right\}\]

\noi where $Q=\mb R^{2n}\setminus(\mc C\Om\times \mc C\Om)$ and
 $\mc C\Om := \mb R^n\setminus\Om$. In case of $p=2$, the space $X$ was firstly introduced by Servadei and Valdinoci \cite{mp}. The space X is a normed linear space endowed with the norm
\begin{align}\label{1}
 \|w\|_X = \|w\|_{L^p(\Om)} +\left( \int_{Q}\frac{|w(x)- w(y)|^{p}}{|x-y|^{n+ps}}dx
dy\right)^{\frac1p}.
\end{align}
 Then we define
 \[ X_0 = \{w\in X : w = 0 \;\text{a.e. in}\; \mb R^n\setminus \Om\}\]
with the norm
\begin{align}\label{01}
 \|w\|=\left(\int_{Q}\frac{|w(x)-w(y)|^{p}}{|x-y|^{n+ps}}dx dy\right)^{\frac1p}
\end{align}
is a reflexive Banach space.
We notice that, the norms in \eqref{2} and \eqref{1} are not same because $\Om\times
\Om$ is strictly contained in $Q$. Now we define the space
\[C_{X_0}:= \{w\in C_{c}^{\infty}(\mb R^n): w=0\; \text{in}\; \mb R^n\setminus \Om\}.\]
Then $C_{X_0}$ is a dense in the space $X_0$.\\
\noi Define $S:=\inf_{w\in X_{0}}\left\{\frac{\int_{\mb R^{2n}}|w(x)-w(y)|^{p}|x-y|^{-(n+ps)}dx dy}{(\int_{\mb R}|u|^{p^*_s}dx)^{\frac{p}{p^*_s}}}\right\}$.

\begin{Definition}
\noi A weak solution of the problem $(P_{\la})$ is a function $w\in \h$, $w>0$ in $\Om$ such that for  every $v\in\h$
{\small\begin{align*}
\int_{Q}\frac{|w(x)-w(y)|^{p-2}(w(x)-w(y))(v(x)-v(y))}{|x-y|^{(n+ps)}}  dxdy = &\io a(x) (w^{-q} v)(x)dx  + \la\io b(x) (w^{r} v)(x) dx.
\end{align*}}
\end{Definition}
In order to present the existence of positive solution of $(P_{\la})$, we will consider the following problem
 \begin{equation*}
 (P_{\la}^{+}) \left\{
\begin{array}{lr}
- 2\int_{\mb R^n}\frac{|w(y)-w(x)|^{p-2}(w(y)-w(x))}{|x-y|^{n+ps}}dy =  a(x) w_+^{-q}+ \la b(x) w_+^r\; \text{in}\;
\Om \\
 \quad \quad w>0\;\text{in}\;\Om, \quad w = 0 \; \mbox{in}\; \mb R^n \setminus\Om.\\
\end{array}
\quad \right.
\end{equation*}
where  $w_+:= \max\{w,0\}$, denote the positive part of $w$.
Then the function $w\in \h$, $w>0$ in $\Om$ is a weak solution of the problem $(P_{\la}^{+})$ if for  every $v\in\h$
{\small\begin{align*}
\int_{Q}\frac{|w(x)-w(y)|^{p-2}(w(x)-w(y))(v(x)-v(y))}{|x-y|^{(n+ps)}}  dxdy = &\io a(x) (w_{+}^{-q} v)(x)dx  + \la\io b(x) (w_{+}^{r} v)(x) dx.
\end{align*}}
We note that if $w>0$ is a solution of $(P_{\la}^{+})$ then one can easily see that $w$ is also a solution $(P_{\la})$. To find the solution of $(P_{\la}^{+})$, we will use variational approach. So we define the associated functional $J_{\la}: \h \ra \mb [-\infty,\infty)$ as
\[J_{\la}(w) = \frac{1}{p}\int_{Q}\frac{|w(x)-w(y)|^{p}}{|x-y|^{n+ps}} dxdy - \frac{1}{1-q}\io a(x) w_{+}^{1-q}(x) dx -\frac{\la}{r+1} \io b(x) w_{+}^{r+1}(x) dx. \]



\noi Now for $w\in \h$, we define the fiber map $\phi_{w}: \mb
R^+\ra \mb R$ as
\begin{align*}
\phi_{w}(t) &= J_{\la}(tw) = \frac{t^p}{p} \|w\|^p-  \frac{t^{1-q}}{1-q}\io a(x) w_{+}^{1-q}(x)dx -\frac{\la t^{r+1}}{r+1}\io b(x) w_{+}^{r+1}(x) dx.
 \end{align*}
 Also
 \begin{align*}
\phi_{w}^{\prime}(t) &= t^{p-1} \|w\|^p - t^{-q} \io a(x)w_{+}^{1-q}(x) dx - \la t^{r} \io b(x)w_{+}^{r+1}(x)  dx,\\
\phi_{w}^{\prime\prime}(t) &= (p-1)t^{p-2}\|w\|^p + q  t^{-q-1} \io a(x)w_{+}^{1-q}(x) dx  - r \la t^{r-1}\io b(x) w_{+}^{r+1}(x) dx .
\end{align*}
\noi It is easy to see that the energy functional $J_{\la}$ is not
bounded below on the space $\h$. But we will show that it is bounded below
on an appropriate subset of $\h$ and a minimizer on
subsets of this set gives rise to solutions of $(P_{\la}^{+})$. In
order to obtain the existence results, we define
\begin{align*}
 \mc N_{\la}:&= \{w\in X_0: \phi_{w}^{\prime}(t)=\langle J^{\prime}_{\la}(w),w \rangle =0 \}\\
 &= \left\{w\in X_0: \|w\|^p = \io a(x) w_{+}^{1-q}(x) dx + \la\io b(x) w_{+}^{r+1}(x) dx \right\}.
\end{align*}
Note that $w\in\mc N_\la$ if $w$ is a solution of problem $(P_{\la}^{+})$. Also one can easily see that $tw\in \mathcal N_{\la}$ if and only if
$\phi_{w}^{\prime}(t)=0$. In order to obtain our result, we decompose $\mc N_{\la}$ with $\mathcal N_{\la}^{\pm}$, $\mathcal N_{\la}^{0}$ defined as follows:
\begin{align*}
\mathcal N_{\la}^{\pm}&:= \left\{w\in \mc N_{\la}: \phi_{w}^{\prime\prime}(1)
\gtrless0\right\} =\left\{w\in \mc N_{\la}: (p-1+q)\|w\|^p \gtrless \la(r+q)\io \bw\right\}\\
\mathcal N_{\la}^{0}&:= \left\{w\in \mc N_{\la}:
\phi_{w}^{\prime\prime}(1) = 0\right\}=\left\{w\in\mc N_{\la}: (p-1+q)\|w\|^p = \la(r+q)\io \bw\right\}.
\end{align*}
\noi Our results are as follows:

Inspired by \cite{yc}, we show that how variational methods can be used to established some existence and multiplicity results for $(P_{\la}^{+})$:
\begin{Theorem}\label{th1}
Suppose that $\la\in (0,\La)$, where
\[\La:= \frac{(p-1+q)}{(r+q)} \left(\frac{r-p+1}{r+q}\right)^{\frac{r-p+1}{p-1+q}} \frac{1}{\|b\|} \left(\frac{S^{r+q}}{\|a\|^{r-p+1}}\right)^{\frac{1}{p-1+q}}\]
then the problem $(P_{\la})$ has at least two solutions $w\in\mc N_{\la}^{+}$, $W\in \mc N_{\la}^{-}$ with $\|W\|>\|w\|$.
\end{Theorem}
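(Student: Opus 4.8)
I would attack the statement through the fibering/Nehari machinery already set up, in four moves.

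\textbf{Step 1: the decomposition $\mc N_{\la}=\mc N_{\la}^{+}\cup\mc N_{\la}^{-}$ and $\mc N_{\la}^{0}=\{0\}$.} Writing $A=\io\aw$ and $B=\io\bw$, I note that $tw\in\mc N_{\la}$ iff $m_{w}(t):=t^{p-1+q}\|w\|^{p}-\la t^{r+q}B$ equals $A$, and that at any such $t$ one has $\phi_{w}''(t)=t^{-q}m_{w}'(t)$, so the sign of $m_{w}'$ decides whether $tw\in\mc N_{\la}^{+}$ or $\mc N_{\la}^{-}$. If $B\le 0$ then $m_{w}$ is strictly increasing from $0$ to $+\infty$, giving a single root, which lies in $\mc N_{\la}^{+}$. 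If $B>0$ then $m_{w}$ rises to a unique maximum at $t_{\max}=\big((p-1+q)\|w\|^{p}/\la(r+q)B\big)^{1/(r-p+1)}$ and falls to $-\infty$, with
\begin{align*}
m_{w}(t_{\max})=\frac{r-p+1}{r+q}\Big(\frac{p-1+q}{\la(r+q)B}\Big)^{\frac{p-1+q}{r-p+1}}\|w\|^{\frac{p(r+q)}{r-p+1}}.
\end{align*}
Feeding in $A\le\|a\|S^{-(1-q)/p}\|w\|^{1-q}$ and $B\le\|b\|S^{-(r+1)/p}\|w\|^{r+1}$, the powers of $\|w\|$ in $A<m_{w}(t_{\max})$ cancel (both reduce to $\|w\|^{1-q}$) and the inequality becomes exactly $\la<\La$. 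Thus for $\la\in(0,\La)$ every $w\ne0$ with $B>0$ gives two roots $t_{1}<t_{\max}<t_{2}$ with $t_{1}w\in\mc N_{\la}^{+}$ and $t_{2}w\in\mc N_{\la}^{-}$; no tangency occurs, so $\mc N_{\la}^{0}=\{0\}$ and the two pieces are separated.

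\textbf{Step 2: minimizing on $\mc N_{\la}^{+}$.} Substituting the Nehari identity $\|w\|^{p}=A+\la B$ gives $J_{\la}(w)=\frac{r+1-p}{p(r+1)}\|w\|^{p}-\frac{r+q}{(1-q)(r+1)}A$ on $\mc N_{\la}$; since $p>1-q$, the bound on $A$ makes $J_{\la}$ coercive and bounded below there. On $\mc N_{\la}^{+}$ the fibre minimum sits where $\phi_{w}<0$, so $\alpha^{+}:=\inf_{\mc N_{\la}^{+}}J_{\la}<0$. For a minimizing sequence $w_{k}$, coercivity yields $w_{k}\rightharpoonup w$ (subsequence) in $\h$; since $1-q<p^{*}_{s}$ and $r+1<p^{*}_{s}$, the maps $w\mapsto A,B$ are weakly continuous (a bounded sequence is bounded in $L^{p^{*}_{s}}$ and converges a.e., so $(w_{k})_{+}^{1-q}$ and $(w_{k})_{+}^{r+1}$ converge weakly in the Lebesgue spaces dual to those of $a$ and $b$), while $\|\cdot\|^{p}$ is weakly lower semicontinuous. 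Hence $J_{\la}(w)\le\alpha^{+}<0$, so $w\ne0$; if $\|w\|^{p}<A+\la B$ then $\phi_{w}'(1)<0$, the $\mc N_{\la}^{+}$ projection $t_{1}w$ has $t_{1}>1$ and $J_{\la}(t_{1}w)<J_{\la}(w)\le\alpha^{+}$, a contradiction, so $w\in\mc N_{\la}^{+}$, the convergence is strong, and $J_{\la}(w)=\alpha^{+}$.

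\textbf{Step 3: minimizing on $\mc N_{\la}^{-}$ and the norm gap.} From $\phi_{W}''(1)<0$ and $B\le\|b\|S^{-(r+1)/p}\|W\|^{r+1}$ one gets $\|W\|>\rho_{1}:=\big((p-1+q)/\la(r+q)\|b\|S^{-(r+1)/p}\big)^{1/(r+1-p)}$, while on $\mc N_{\la}^{+}$ the inequality $A>\frac{r-p+1}{r+q}\|w\|^{p}$ with the bound on $A$ gives $\|w\|<\rho_{0}:=\big(\frac{r+q}{r-p+1}\|a\|S^{-(1-q)/p}\big)^{1/(p-1+q)}$; a direct check shows $\rho_{0}<\rho_{1}$ is precisely $\la<\La$, so $\|W\|>\rho_{1}>\rho_{0}>\|w\|$. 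Existence of a minimizer $W\in\mc N_{\la}^{-}$ follows by the direct method as in Step 2: boundedness from coercivity, $W\ne0$ from $\|W\|\ge\rho_{1}$, and $W\in\mc N_{\la}^{-}$ because $\mc N_{\la}^{0}=\{0\}$ keeps the weak limit off the critical branch, the relevant projection now being the decreasing root $t_{2}$.

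\textbf{Step 4: the minimizers solve $(P_{\la})$ --- the main obstacle.} The crux is that $J_{\la}$ is not G\^{a}teaux differentiable because of $w_{+}^{1-q}$, so I cannot read off the Euler--Lagrange equation from a Lagrange multiplier. I would first take the minimizers nonnegative (replacing $w$ by $w_{+}$ does not increase $\|w\|$ and leaves $A,B$ unchanged) and deduce $w,W>0$ in $\Om$ from $a>0$ and the strong maximum principle for $(-\De)^{s}_{p}$. Then, for $0\le\varphi\in\h$ and small $t>0$, projecting $w+t\varphi$ back onto $\mc N_{\la}^{+}$ (resp.\ $\mc N_{\la}^{-}$) and letting $t\downarrow0$ yields the one-sided inequality
\begin{align*}
\int_{Q}\frac{|w(x)-w(y)|^{p-2}(w(x)-w(y))(\varphi(x)-\varphi(y))}{|x-y|^{n+ps}}\,dx\,dy\ \ge\ \io a\,w^{-q}\varphi\,dx+\la\io b\,w^{r}\varphi\,dx,
\end{align*}
with $\varphi=w$ recovering the Nehari identity. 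A standard approximation (testing with $(w+\varepsilon\psi)_{+}$ and letting $\varepsilon\to0$, using $w>0$ to pass to the limit in the singular integral by monotone/dominated convergence) upgrades this to an equality for every $\psi\in\h$, so $w$ and $W$ are weak solutions of $(P_{\la}^{+})$ and hence of $(P_{\la})$. The delicate points throughout are the weak-to-strong convergence step and the limit in the singular term, where the nonlocal seminorm and the absence of compactly supported cut-offs demand the most care.
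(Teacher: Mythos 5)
Your Steps 1--3 take a genuinely different route from the paper: the paper never proves strong convergence of a minimizing sequence before extracting the Euler--Lagrange information. Instead it applies Ekeland's variational principle to produce an almost-minimizing sequence $\{w_k\}\subset\mc N_{\la}^{+}$ satisfying $J_{\la}(w)\geq J_{\la}(w_k)-\frac{1}{k}\|w-w_k\|$, derives a variational inequality along that sequence (Lemmas \ref{le6}, \ref{le7}, \ref{le8}), and only afterwards concludes strong convergence and $w\in\mc N_{\la}^{+}$ (Lemma \ref{le9}). Your plan --- weak continuity of $w\mapsto\io\aw$ and $w\mapsto\io\bw$, plus the fibering-map projection comparison, to force norm convergence so that the weak limit is an \emph{exact} minimizer lying in $\mc N_{\la}^{\pm}$ --- is viable, and it would let you replace the Ekeland error term $\frac{1}{k}\|\cdot\|$ by $0$ in all later estimates, a real simplification. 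One repair is needed: from $\phi_{w}^{\prime}(1)<0$ you assert the projection satisfies $t_1>1$, but a priori the alternative $t_2<1$ is possible; you must exclude it, e.g.\ via the gap structure of Lemma \ref{le2} ($t_2\|w\|>A_{\la}>A_0\geq\|w\|$, hence $t_2>1$), which you do have available from your Step 3.

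The genuine gaps are in Step 4, which is precisely where the paper does most of its work. First, invoking ``the strong maximum principle for $(-\De)^s_p$'' to get $w>0$ is circular: at that stage $w$ is only a constrained minimizer, not a solution or supersolution of any equation, so no maximum principle applies. In the paper positivity is a \emph{consequence}, not a prerequisite, of the variational inequality: Fatou's lemma yields $\io a(x)w_{+}^{-q}\phi\,dx<\infty$ for all $0\leq\phi$, and since $a>0$ this forces $w_{+}>0$ a.e.\ (Corollary \ref{cc1}); only then does testing the inequality with $\phi=w^{-}$ give $w\geq 0$ (Lemma \ref{le9}). Your order must be reversed accordingly (your preliminary replacement of $w$ by $w_{+}$ also needs justification, since $w_{+}$ need not remain on the Nehari constraint). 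Second, the sentence ``projecting $w+t\varphi$ back onto $\mc N_{\la}^{\pm}$ and letting $t\downarrow 0$ yields the one-sided inequality'' conceals the central technical obstacle. The projection coefficient $f(t)$ exists by the implicit function theorem (Lemma \ref{le4}), but because the singular term makes the defining function non-differentiable in the $\h$-variable, finiteness of the difference quotients $(f(t)-1)/t$ is not automatic; establishing it is exactly the content of Lemmas \ref{le6} and \ref{le7}, and it hinges on the strict bound $(p-1+q)\|w\|^p-\la(r+q)\io\bw\geq C>0$. Without two-sided control of these quotients you cannot pass to the limit $t\downarrow 0$, since the expansion of $J_{\la}(f(t)(w+t\varphi))-J_{\la}(w)\geq 0$ contains them multiplied by terms of both signs. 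In your setting the fix is available --- the strict bound holds because $w\in\mc N_{\la}^{+}$ exactly, the lower bound on the quotient comes from differencing the two Nehari constraints, and the upper bound comes from minimality of $w$ in place of the Ekeland inequality --- but this argument is the heart of the proof and must actually be carried out, not subsumed into ``letting $t\downarrow 0$.''
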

Next, we obtain the blow up behavior of the solution $W_\e\in \mc N_{\la}^{-}$ of problem $(P_{\la})$ with $r=p-1+\e$ as $\e\ra 0^{+}$.
\begin{Theorem}\label{co1}
let $W_{\e}\in \mc N_{\la}^{-}$ be the solution of problem $(P_{\la})$ with $r= p-1+\e$, where $\la\in (0,\La)$, then
\begin{align*}
\|W_\e\|> C_{\e}\left(\frac{\La}{\la}\right)^{\frac{1}{\e}},
\end{align*}
where \[C_{\e}= \left(1+\frac{p-1+q}{\e}\right)^{\frac{1}{p-1+q}}\|a\|^{\frac{1}{p-1+q}} \left(\frac{1}{\sqrt[p]{S}}\right)^{\frac{1-q}{p-1+q}}\ra \infty\;\mbox{as}\;\e\ra 0^{+}.\]
Namely, $W_\e$ blows up faster than exponentially with respect to $\e$.
\end{Theorem}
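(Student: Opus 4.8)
The plan is to obtain the lower bound by feeding the defining inequality of $\mc N_{\la}^{-}$ into the Sobolev embedding, and then to recover the precise constant $C_{\e}(\La/\la)^{1/\e}$ by a direct algebraic comparison after setting $r=p-1+\e$. Since all the hypotheses of Theorem~\ref{th1} hold with $r=p-1+\e$ (note $q<p-1<r<p^{*}_{s}-1$ for $\e>0$ small), the solution $W_{\e}\in\mc N_{\la}^{-}$ exists, and throughout $\La$ is understood to be the threshold of Theorem~\ref{th1} evaluated at $r=p-1+\e$.

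First, because $W_{\e}\in\mc N_{\la}^{-}$, its definition gives
\[
(p-1+q)\|W_{\e}\|^{p} < \la(r+q)\io b(x)(W_{\e})_{+}^{r+1}(x)\,dx,
\]
so in particular the integral on the right is positive. To estimate it from above I would apply H\"older's inequality with the conjugate exponents $\frac{p^{*}_{s}}{p^{*}_{s}-1-r}$ and $\frac{p^{*}_{s}}{r+1}$ and then the Sobolev inequality $\|(W_{\e})_{+}\|_{L^{p^{*}_{s}}}\le S^{-1/p}\|W_{\e}\|$, obtaining
\[
\io b(x)(W_{\e})_{+}^{r+1}(x)\,dx \le \|b\|\,S^{-\frac{r+1}{p}}\,\|W_{\e}\|^{r+1}.
\]
Combining the last two displays and dividing by $\|W_{\e}\|^{p}>0$ (permissible since $W_{\e}\not\equiv 0$), and using $r+1>p$ to keep a positive power of $\|W_{\e}\|$ on the right, yields
\[
(p-1+q) < \la(r+q)\,\|b\|\,S^{-\frac{r+1}{p}}\,\|W_{\e}\|^{\,r+1-p},
\]
hence, since $r+1-p=\e$,
\[
\|W_{\e}\|^{\e} > \frac{(p-1+q)\,S^{\frac{r+1}{p}}}{\la(r+q)\,\|b\|}.
\]

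It then remains only to substitute $r=p-1+\e$ (so $r+q=p-1+q+\e$ and $r+1=p+\e$), take $\e$-th roots, and check that the constant so produced coincides with $C_{\e}(\La/\la)^{1/\e}$. Factoring out $\la^{-1/\e}$, this amounts to the identity
\[
\left(\frac{(p-1+q)\,S^{\frac{p+\e}{p}}}{(p-1+q+\e)\,\|b\|}\right)^{1/\e} = C_{\e}\,\La^{1/\e}.
\]
I expect this bookkeeping to be the only delicate point of the argument. Writing out $\La$ at $r=p-1+\e$ and raising it to the power $1/\e$, the factors $\big(\tfrac{p-1+q+\e}{\e}\big)^{1/(p-1+q)}$ and $\|a\|^{\pm 1/(p-1+q)}$ supplied by $C_{\e}$ cancel against the matching factors of $\La^{1/\e}$, while the powers of $S$ recombine via $-\frac{1-q}{p(p-1+q)}+\frac{1}{p-1+q}=\frac1p$ to give exactly $S^{1/\e}S^{1/p}=S^{(p+\e)/(p\e)}$; what survives is precisely the left-hand side. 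Finally $C_{\e}\to\infty$ as $\e\to 0^{+}$ since $\big(1+\tfrac{p-1+q}{\e}\big)^{1/(p-1+q)}\to\infty$, and because $\la<\La$ forces $\La/\la>1$ the factor $(\La/\la)^{1/\e}$ diverges as well, giving the claimed faster-than-exponential blow-up.
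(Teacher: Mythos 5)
Your proposal is correct and follows essentially the same route as the paper: your H\"older--Sobolev estimate starting from the defining inequality of $\mc N_{\la}^{-}$ is exactly the paper's Lemma \ref{le2} (the gap bound $\|W\|>A_{\la}$, which the paper simply cites), and your algebraic identity $\bigl(\tfrac{(p-1+q)S^{(p+\e)/p}}{(p-1+q+\e)\|b\|}\bigr)^{1/\e}=C_{\e}\La^{1/\e}$ is the same bookkeeping the paper performs via the exponent relation $\tfrac{p(r+q)}{(p-1+q)(r-p+1)}-\tfrac{r+1}{r-p+1}=\tfrac{1-q}{p-1+q}$ before substituting $r=p-1+\e$. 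The cancellations you describe all check out, so the argument is complete.
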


\noi {\bf Remark:} If $w$ is a positive solution of the following problem
\begin{equation*}
  \left\{
\begin{array}{lr}
 - 2\int_{\mb R^n}\frac{|w(y)-w(x)|^{p-2}(w(y)-w(x))}{|y|^{n+ps}}dy =  a(x) w^{-q}+ \la b(x) w^r\; \text{in}\;
\Om \\
 \quad \quad w>0\;\text{in}\;\Om, \quad w = 0 \; \mbox{in}\; \mb R^n \setminus\Om.\\
\end{array}
\quad \right.
\end{equation*}
then one can easily see that $u= \la^{\frac{1}{r-1+p}}w$ is a positive solution of the following problem
 \begin{equation*}
(Q_{\la})  \left\{
\begin{array}{lr}
 - 2\int_{\mb R^n}\frac{|u(y)-u(x)|^{p-2}(u(y)-u(x))}{|y|^{n+ps}}dy = \la^{\frac{p-1+q}{r-p+1}} a(x) u^{-q}+  b(x) u^r\; \text{in}\;
\Om \\
 \quad \quad u>0\;\text{in}\;\Om, \quad u = 0 \; \mbox{in}\; \mb R^n \setminus\Om.\\
\end{array}
\quad \right.
\end{equation*}
That is, the problem $(Q_{\la})$ has two positive solutions for $\la\in(0,\La)$.
\section{Fibering map analysis}
In this section, we show that $\mc N_{\la}^{\pm}$ is nonempty and $\mc N_{\la}^{0}=\{0\}$. Moreover, $J_{\la}$ is bounded below and coercive.
\begin{Lemma}\label{le1}
Let $\la\in(0, \La)$. Then for each $w\in X_0$ with $\io \aw>0$, we have the following:
\begin{enumerate}
\item[$(i)$] $ \io \bw\leq 0$, then there exists a unique $0<t_1<t_{max}$ such that $t_1 w\in \mc N_{\la}^{+}$ and $J_{\la}(t_1 w)=\ds\inf_{t> 0} J_{\la}(tw)$,
\item[$(ii)$] $\io \bw > 0$, then there exists a unique $t_1$ and $t_2$ with $0<t_1<t_{max}<t_2$ such that $t_1 w\in \mc N_{\la}^{+} $, $t_2 w\in \mc N_{\la}^{-}$ and
$J_{\la}(t_1 w)=\ds\inf_{0\leq t\leq t_{max}} J_{\la}(t w)$, $J_{\la}(t_2 w)=\ds\sup_{t\geq t_1} J_{\la}(t w)$.
\end{enumerate}
\end{Lemma}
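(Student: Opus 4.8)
The plan is to reduce the whole statement to a one–variable analysis of the fibering map $\phi_{w}$ built around a single auxiliary function whose \emph{shape does not depend on the sign} of $\io\bw$. Since $tw\in\mc N_{\la}\iff\phi_{w}^{\prime}(t)=0$, I first rewrite $\phi_{w}^{\prime}(t)=t^{r}\bigl(\psi_{w}(t)-\la\io\bw\bigr)$, where I set $\psi_{w}(t):=t^{p-1-r}\|w\|^{p}-t^{-q-r}\io\aw$. Thus the critical points of $\phi_{w}$ are exactly the solutions of $\psi_{w}(t)=\la\io\bw$. Because $\|w\|^{p}>0$ and, by hypothesis, $\io\aw>0$, one checks that $\psi_{w}(t)\to-\infty$ as $t\to0^{+}$, that $\psi_{w}(t)\to0^{+}$ as $t\to\infty$, and that $\psi_{w}$ has a unique critical point $t_{\max}=\left(\frac{(r+q)\io\aw}{(r-p+1)\|w\|^{p}}\right)^{\frac{1}{p-1+q}}$ at which it attains its global maximum; hence $\psi_{w}$ increases strictly on $(0,t_{\max})$ and decreases strictly on $(t_{\max},\infty)$, staying positive on the decreasing branch. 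Note that $t_{\max}$ involves only $a$ and $\|w\|$, which is exactly why it is meaningful in both cases of the statement.

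With this picture the two alternatives are read off from where the horizontal line $y=\la\io\bw$ meets the graph of $\psi_{w}$. In case $(i)$, $\la\io\bw\le0$, so the line lies at or below $0$; since the decreasing branch of $\psi_{w}$ is strictly positive, the only intersection is on the increasing branch, giving a unique $t_{1}\in(0,t_{\max})$. Differentiating the factorisation and using $\psi_{w}(t_{1})=\la\io\bw$ gives $\phi_{w}^{\prime\prime}(t_{1})=t_{1}^{r}\psi_{w}^{\prime}(t_{1})>0$, so $t_{1}w\in\mc N_{\la}^{+}$; moreover $\phi_{w}^{\prime}$ is negative on $(0,t_{1})$ and positive on $(t_{1},\infty)$, so $t_{1}$ is the global minimiser, i.e. $J_{\la}(t_{1}w)=\inf_{t>0}J_{\la}(tw)$.

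In case $(ii)$, $\la\io\bw>0$, and a second intersection appears on the decreasing branch precisely when the line lies strictly below the maximum, i.e. when $\la\io\bw<\psi_{w}(t_{\max})$; this produces $t_{1}\in(0,t_{\max})$ and $t_{2}\in(t_{\max},\infty)$. The same identity yields $\phi_{w}^{\prime\prime}(t_{1})=t_{1}^{r}\psi_{w}^{\prime}(t_{1})>0$ and $\phi_{w}^{\prime\prime}(t_{2})=t_{2}^{r}\psi_{w}^{\prime}(t_{2})<0$, so $t_{1}w\in\mc N_{\la}^{+}$ and $t_{2}w\in\mc N_{\la}^{-}$. Tracking the sign of $\phi_{w}^{\prime}$ (negative on $(0,t_{1})$, positive on $(t_{1},t_{2})$, negative on $(t_{2},\infty)$; the last using $\io\bw>0$ so that $\phi_{w}(t)\to-\infty$) identifies $t_{1}$ as the minimiser over $[0,t_{\max}]$ and $t_{2}$ as the maximiser over $[t_{1},\infty)$, as claimed.

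The main obstacle is the uniform estimate $\la\io\bw<\psi_{w}(t_{\max})$ for \emph{every} admissible $w$, which is exactly what forces the threshold $\La$. Substituting $t_{\max}$ back yields the closed form
\[
\psi_{w}(t_{\max})=\frac{p-1+q}{r-p+1}\left(\frac{r-p+1}{r+q}\right)^{\frac{r+q}{p-1+q}}\|w\|^{\frac{p(r+q)}{p-1+q}}\left(\io\aw\right)^{-\frac{r-p+1}{p-1+q}}.
\]
Then Hölder's inequality with the exponents in $(a1)$, $(b1)$ together with the Sobolev embedding ($\|w\|_{p^{*}_{s}}\le S^{-1/p}\|w\|$) gives $\io\aw\le\|a\|\,S^{-(1-q)/p}\|w\|^{1-q}$ and $\io\bw\le\|b\|\,S^{-(r+1)/p}\|w\|^{r+1}$. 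Inserting these, both sides of the target inequality scale like $\|w\|^{r+1}$, so the $\|w\|$-dependence cancels and the condition collapses to one on $\la$ alone; simplifying the numerical constant (the two candidate prefactors have ratio $1$) and the $S$-exponent, which reduces to $(r+q)/(p-1+q)$ after using $(r+1)(p-1+q)+(1-q)(r-p+1)=p(r+q)$, shows that the estimate holds precisely when $\la<\La$. I would isolate this computation as the crux; everything else is elementary one-variable calculus of $\psi_{w}$ and $\phi_{w}$.
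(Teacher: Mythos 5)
Your proof is correct and takes essentially the same route as the paper's: the paper's $\psi_w$ is exactly your $\psi_w$ minus the constant $\la\io\bw$, so your analysis of where the graph of $\psi_w$ meets the horizontal line $y=\la\io\bw$ is precisely the paper's analysis of the zeros of its $\psi_w$, with the same $t_{max}$, the same H\"{o}lder--Sobolev bounds on $\io\aw$ and $\io\bw$, and the same cancellation of the $\|w\|$-dependence producing the threshold $\La$. Your closed form for $\psi_w(t_{max})$ agrees with the paper's (as you note, the two prefactors coincide), so the arguments differ only in bookkeeping.
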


\begin{proof}
For $t>0$, we define
\begin{align*}
\psi_{w}(t)= t^{p-1-r}\|w\|^p -t^{-r-q} \io\aw-\la\io\bw.
\end{align*}
One can easily see that $\psi_{w}(t)\ra -\infty$ as $t\ra 0^{+}$. Now
\begin{align*}
\psi_{w}^{\prime}(t)&= (p-1-r)t^{p-2-r}\|w\|^p +(r+q)t^{-r-q-1} \io\aw.\\
\psi_{w}^{\prime\prime}(t)&= (p-1-r)(p-2-r)t^{p-r-3}\|w\|^p -(r+q)(r+q+1)t^{-r-q-2} \io\aw.
\end{align*}
Then $\psi_{w}^{\prime}(t)=0$ if and only if $t=t_{max}:= \left[\frac{(r-p+1)\|w\|^p}{(r+q)\io \aw}\right]^{-\frac{1}{p-1+q}}$. Also
\begin{align*}
\psi_{w}^{\prime\prime}&(t_{max})= (p-1-r)(p-2-r)\left[\frac{(r-p+1)\|w\|^p}{(r+q)\io\aw}\right]^{\frac{r-p+3}{p-1+q}}\|w\|^p\\
&\quad - (r+q)(r+q+1)\left[\frac{(r-p+1)\|w\|^p}{(r+q)\io\aw}\right]^{\frac{r+q+2}{p-1+q}} \io\aw \\
&= -\|w\|^p (r-p+1)(p-1+q)\left[\frac{(r-p+1)\|w\|^p}{(r+q)\io\aw}\right]^{\frac{r-p+3}{p-1+q}}<0.
\end{align*}
Thus $\psi_w$ achieves its maximum at $t=t_{max}$. Now using the H\"{o}lder's inequality and Sobolev inequality, we obtain
\begin{align}\label{e2}
\io \aw \leq& \left[\io |a(x)|^{\frac{p^{*}_s}{p_{s}^*-1+q}}\right]^{\frac{p^{*}_s+ q-1}{p^{*}_s}}  \left[\io |w(x)|^{p_{s}^*} dx\right]^{\frac{1-q}{p^{*}_s}}\notag\\
\leq& \|a\| \left(\frac{\|w\|}{\sqrt[p]{S}}\right)^{1-q}.
\end{align}

\begin{align}\label{e3}
\io \bw \leq& \left[\io |b(x)|^{\frac{p^{*}_s}{p_{s}^*-1-r}}\right]^{\frac{p^{*}_s - r-1}{p^{*}_s}} \left[\io |w(x)|^{p_{s}^*} dx\right]^{\frac{r+1}{p^{*}_s}} \notag\\
\leq &\|b\| \left(\frac{\|w\|}{\sqrt[p]{S}}\right)^{r+1}.
\end{align}
Using \eqref{e2} and \eqref{e3} we obtain,
\begin{align}\label{e40}
\psi_{w}(t_{max})
&= \frac{(p-1+q)}{(r+q)} \left(\frac{r-p+1}{r+q}\right)^{\frac{r-p+1}{p-1+q}} \frac{\|w\|^{\frac{p(r+q)}{(p-1+q)}}}{[\io\aw]^{\frac{r-p+1}{p-1+q}}}-\la \io\bw \notag\\
&\geq \left[\frac{(p-1+q)}{(r+q)} \left(\frac{r-p+1}{r+q}\right)^{\frac{r-p+1}{p-1+q}} \left(\frac{(\sqrt[p]{S})^{(1-q)}}{\|a\|}\right)
^{\frac{(r-p+1)}{(p-1+q)}}  - \la \|b\| \left(\frac{1}{\sqrt[p]{S}}\right)^{r+1} \right]\|w\|^{r+1}\notag\\
&\equiv E_{\la} \|w\|^{r+1}.
\end{align}
where
\begin{align*}
E_{\la} &= \left[\frac{(p-1+q)}{(r+q)} \left(\frac{r-p+1}{r+q}\right)^{\frac{r-p+1}{p-1+q}}\left(\frac{(\sqrt[p]{S})^{(1-q)}}{\|a\|}\right)^{\frac{r-p+1}{p-1+q}}  - \la \|b\| \left(\frac{1}{\sqrt[p]{S}}\right)^{r+1} \right]
\end{align*}
Then we see that $E_{\la}=0$ if and only if $\la=\La$, where
\begin{align*}
\La: =  \frac{(p-1+q)}{(r+q)} \left(\frac{r-p+1}{r+q}\right)^{\frac{r-p+1}{p-1+q}} \frac{1}{\|b\|} \left(\frac{S^{r+q}}{\|a\|^{r-p+1}}\right)^{\frac{1}{p-1+q}}.
\end{align*}
\noi Thus for $\la\in(0,\La)$, we have $E_\la>0$, and therefore it follows from \eqref{e40} that $\psi_{w}(t_{max})>0$. \\
\noi $(i)$ If $\io\bw\geq 0$, then $\psi_{w}(t)\ra -\la\io \bw<0$ as $t\ra\infty$. Consequently, $\psi_{w}(t)$ has exactly two points $0<t_{1}<t_{max}<t_{2}$ such that
\[ \psi_{w}(t_{1})=0=\psi_{w}(t_{2})\;\mbox{and}\; \psi^{\prime}_{w}(t_{1})>0> \psi^{\prime}_{w}(t_{2}).\]
Now we show that if $\psi_{w}(t)=0$ and $\psi^{\prime}_{w}(t)>0$, then $tw\in\mc N_{\la}^{+}$.
 \begin{align*}
 \psi_{w}(t)=0 &\Rightarrow t^{p-1-r}\|w\|^p - t^{-r-q} \io\aw -\la \io \bw =0\\
 &\Rightarrow \|tw\|^p =  \io a(x)(t w)_{+}^{1-q}(x) dx + \la \io b(x)(t w)_{+}^{r+1}(x) dx \\
 &\Rightarrow tw\in \mc N_{\la},
 \end{align*}
 and therefore
 \begin{align*}
 \psi^{\prime}_{w}(t)>0 &\Rightarrow (p-1-r)t^{p-2-r}\|w\|^p -(-r-q) t^{-r-q-1} \io \aw >0\\
 &\Rightarrow (p-1-r)\|tw\|^p +(r+q)  \io a(x)(t w)_{+}^{1-q}(x) dx >0\\
 &\Rightarrow (p-1-r)\|tw\|^p +(r+q) \left[\|tw\|^p -\la  \io b(x)(t w)_{+}^{r+1}(x) dx\right]>0, \; \mbox{since}\; tw\in \mc N_{\la}\\
 &\Rightarrow (p-1+q)\|tw\|^p - \la(r+q)  \io b(x)(t w)_{+}^{r+1}(x) dx >0 \\
&\Rightarrow tw\in \mc N_{\la}^{+}.
 \end{align*}
Similarly one can show that if $\psi_{w}(t)=0$ and $\psi^{\prime}_{w}(t)<0$, then $tw\in\mc N_{\la}^{-}$.\\
Now $\phi_{w}^{\prime}(t)= t^{r}\psi_w(t)$. Thus $\phi_{w}^{\prime}(t)<0$ in $(0,t_1)$, $\phi_{w}^{\prime}(t)>0$ in $(t_1,t_2)$ and $\phi_{w}^{\prime}(t)<0$ in $(t_2,\infty)$. Hence $J_{\la}(t_1 w)=\ds\inf_{0\leq t\leq t_{max}} J_{\la}(t w)$, $J_{\la}(t_2 w)=\ds\sup_{t\geq t_1} J_{\la}(t w)$. Moreover $t_{1} w\in \mc N_{\la}^{+}$ and $t_{2} w\in \mc N_{\la}^{-}$.\\
\noi $(ii)$ If $\io\bw<0$ and $\psi_{w}(t)\ra -\la\io \bw>0$ as $t\ra\infty$.  Consequently, $\psi_{w}(t)$ has exactly one point $0<t_{1}<t_{max}$ such that
\[ \psi_{w}(t_{1})=0\; \mbox{and}\;\psi^{\prime}_{w}(t_{1})>0.\]
Using $\phi_{w}^{\prime}(t)= t^{r}\psi_w(t)$, we have $\phi_{w}^{\prime}(t)<0$ in $(0,t_1)$, $\phi_{w}^{\prime}(t)>0$ in $(t_1,\infty)$. So, $J_{\la}(t_1 w)=\ds\inf_{t\geq 0} J_{\la}(t w)$. Hence, it follows that $t_{1} w\in \mc N_{\la}^{+}$.
\end{proof}
\begin{Corollary}
Suppose that $\la\in(0,\Lambda)$, then $\mc N_{\la}^{\pm}\ne\emptyset$.
\end{Corollary}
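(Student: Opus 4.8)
The plan is to produce, for each sign, an explicit test direction and then invoke Lemma \ref{le1} to slide along the fiber until we land in the desired piece of the Nehari manifold. The only ingredients needed are the strict positivity of $a$ from (a1), which forces $\io\aw>0$ for every nonnegative nontrivial $w$, and the condition $b^{+}\not\equiv 0$ from (b1), which lets us steer $\io\bw$ to be positive.

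To show $\mc N_{\la}^{+}\ne\emptyset$, I would pick any nonnegative nontrivial $w\in C_{X_0}$; such functions exist since $C_{X_0}$ is dense in $X_0$. Because $a>0$ a.e.\ in $\Om$ and $w_{+}=w\not\equiv 0$, we get $\io\aw>0$, so $w$ satisfies the hypothesis of Lemma \ref{le1}. Whether $\io\bw\leq 0$ (case $(i)$) or $\io\bw>0$ (case $(ii)$), the lemma supplies a $t_1>0$ with $t_1 w\in\mc N_{\la}^{+}$; hence $\mc N_{\la}^{+}\ne\emptyset$ for every $\la\in(0,\La)$.

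For $\mc N_{\la}^{-}$ I need a direction landing in case $(ii)$ of the lemma, that is a nonnegative nontrivial $w$ with the extra property $\io\bw>0$; the lemma then yields $t_2>t_{max}$ with $t_2 w\in\mc N_{\la}^{-}$. Since $b^{+}\not\equiv 0$, the set $\{x\in\Om: b(x)>0\}$ has positive measure, so a standard localization argument (choosing $w$ nonnegative and concentrated on $\{b>0\}$, and using the density of $C_{X_0}$ in $X_0$ together with the integrability of $b$ in (b1) to keep the relevant integrals finite) produces $w\in X_0$, $w\geq 0$, with $\io\bw>0$; as before $\io\aw>0$ holds automatically. Applying Lemma \ref{le1}$(ii)$ to this $w$ gives $t_2 w\in\mc N_{\la}^{-}$, so $\mc N_{\la}^{-}\ne\emptyset$.

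The main, and only mildly delicate, point is verifying that a test function with $\io\bw>0$ exists inside $X_0$; this rests entirely on $b^{+}\not\equiv 0$, with $w_{+}^{r+1}$ made to concentrate where $b$ is positive. Everything else is an immediate consequence of Lemma \ref{le1}, whose standing hypothesis $\io\aw>0$ is guaranteed by the strict positivity of $a$. I should also note that the two directions above can be taken to be the same $w$: once $\io\bw>0$ is arranged, case $(ii)$ delivers both $t_1 w\in\mc N_{\la}^{+}$ and $t_2 w\in\mc N_{\la}^{-}$ simultaneously, which gives both nonemptiness claims at once.
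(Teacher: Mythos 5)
Your proposal is correct and is in substance the same as the paper's proof: the paper likewise chooses a single $w\in X_0\setminus\{0\}$ with $\io\aw>0$ and $\io\bw>0$ (citing (a1) and (b1)) and applies Lemma \ref{le1}(ii) to obtain $t_1 w\in\mc N_{\la}^{+}$ and $t_2 w\in\mc N_{\la}^{-}$ simultaneously, which is exactly the observation in your closing paragraph. The only difference is that you spell out the localization on $\{b>0\}$ that the paper leaves implicit; that added detail is correct and does not change the approach.
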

\begin{proof}
By $(a1)$ and $(b1)$, we can
choose $w\in X_0\setminus\{0\}$ such that $\io\aw>0$ and $\io\bw>0$.
By $(ii)$ of Lemma \ref{le1} there exists unique $t_1$ and $t_2$ such that $t_1 w\in \mc N_{\la}^{+}$, $t_2 w\in \mc N_{\la}^{-}$. In conclusion, $\mc N_{\la}^{\pm}\ne \emptyset.$\QED
\end{proof}
 \begin{Lemma}
 For $\la\in(0,\La)$, we have $\mc N_{\la}^{0}=\{0\}$.
 \end{Lemma}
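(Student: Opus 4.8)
The plan is to argue by contradiction: suppose there exists $w\in\mc N_{\la}^{0}$ with $w\neq 0$ and derive $\la\geq\La$, contrary to the hypothesis $\la\in(0,\La)$. First I would record the two identities defining membership in $\mc N_{\la}^{0}$. Since $w\in\mc N_{\la}$,
\[ \|w\|^p = \io \aw + \la \io \bw, \]
and since $\phi_{w}^{\prime\prime}(1)=0$,
\[ (p-1+q)\|w\|^p = \la (r+q)\io \bw . \]
Eliminating the term $\la\io\bw$ between these two relations gives
\[ \io \aw = \frac{r-p+1}{r+q}\,\|w\|^p , \]
which, because $r-p+1>0$ and $w\neq 0$, shows that $\io\aw>0$. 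This is the crucial fact that lets me invoke the fiber-map analysis of Lemma \ref{le1}, whose positivity conclusion requires exactly this sign condition.

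Next I would translate the conditions $\phi_{w}^{\prime}(1)=0$ and $\phi_{w}^{\prime\prime}(1)=0$ into statements about the auxiliary function $\psi_{w}$ introduced in the proof of Lemma \ref{le1}. Using the identity $\phi_{w}^{\prime}(t)=t^{r}\psi_{w}(t)$, membership $w\in\mc N_{\la}$ is equivalent to $\psi_{w}(1)=0$; differentiating gives $\phi_{w}^{\prime\prime}(t)=r t^{r-1}\psi_{w}(t)+t^{r}\psi_{w}^{\prime}(t)$, so that at $t=1$ the condition $\phi_{w}^{\prime\prime}(1)=0$ reduces, via $\psi_{w}(1)=0$, to $\psi_{w}^{\prime}(1)=0$. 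From the proof of Lemma \ref{le1} the function $\psi_{w}$ has a unique critical point at $t=t_{max}$, where it attains its global maximum; hence $\psi_{w}^{\prime}(1)=0$ forces $1=t_{max}$, and then $\psi_{w}(1)=0$ yields $\psi_{w}(t_{max})=0$.

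Finally I would reach the contradiction. Since $\io\aw>0$ and $\la\in(0,\La)$, the estimate \eqref{e40} already established in Lemma \ref{le1} gives $\psi_{w}(t_{max})\geq E_{\la}\|w\|^{r+1}>0$, contradicting $\psi_{w}(t_{max})=0$ obtained above. Therefore no nonzero element of $\mc N_{\la}^{0}$ exists, i.e.\ $\mc N_{\la}^{0}=\{0\}$. The only delicate point is the first step: one must extract $\io\aw>0$ from the defining identities before the positivity of $\psi_{w}(t_{max})$ can be used, since \eqref{e40} is otherwise unavailable. An alternative, more computational route avoids the fiber map entirely: substituting the two identities above into the H\"older--Sobolev bounds \eqref{e2} and \eqref{e3} produces a lower bound for $\|w\|^{r-p+1}$ from the $b$-term and an upper bound from the $a$-term, and comparing the two yields precisely $\la\geq\La$. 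This is essentially the same computation that defines $\La$, but the fiber-map argument is shorter and reuses work already done.
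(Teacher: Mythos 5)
Your proof is correct and takes essentially the same route as the paper's: both argue by contradiction, derive $\io\aw=\frac{r-p+1}{r+q}\|w\|^p$ from the two defining identities, and then contradict the positivity $\psi_{w}(t_{max})\geq E_{\la}\|w\|^{r+1}>0$ coming from \eqref{e40}. Your intermediate observation that $\psi_{w}'(1)=0$ forces $t_{max}=1$ is just a more conceptual phrasing of the paper's direct substitution, which amounts to evaluating $\psi_{w}(t_{max})$ under those identities and finding it equal to zero.
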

\begin{proof}
 We prove this by contradiction. Assume that there exists $0\not\equiv w\in \mc N_{\la}^{0}$. Then it follows from $w\in \mc N_{\la}^{0}$ that
\[(p-1+q)\|w\|^p = \la (r+q) \io\bw \]
and consequently
\begin{align*}
0&= \|w\|^p -\io\aw - \la\io\bw\\
&=\|w\|^p -\io\aw -\frac{p-1+q}{r+q}\|w\|^p\\
&=\frac{(r-p+1)}{(r+q)}\|w\|^p -\io\aw.
\end{align*}

\noi Therefore, as $\la\in(0,\La)$ and $w\not\equiv 0$, we use similar arguments as those in \eqref{e40} to get
\begin{align*}
0&< E_{\la} \|w\|^{r+1}\\
&\leq \frac{(p-1+q)}{(r+q)}\left(\frac{r-p+1}{r+q}\right)^{\frac{r-p+1}{p-1+q}} \frac{\|w\|^{\frac{p(r+q)}{p-1+q}}}{\left[\io\aw\right]^{\frac{r-p+1}{p-1+q}}}-\la \io\bw \\
&= \frac{(p-1+q)}{(r+q)} \left(\frac{r-p+1}{r+q}\right)^{\frac{r-p+1}{p-1+q}} \frac{\|w\|^{\frac{p(r+q)}{p-1+q}}}{\left(\frac{r-p+1}{r+q}\|w\|^p\right)^{\frac{r-p+1}{p-1+q}}}-\frac{(p-1+q)}{(r+q)}\|w\|^p=0,
\end{align*}
a contradiction. Hence $w=0$. That is, $\mc N_{\la}^{0}=\{0\}$.\QED
\end{proof}

\noi We note that $\La$ is also related to a gap structure in $\mc N_{\la}$:

\begin{Lemma}\label{le2}
Suppose that $\la\in(0,\La)$, then there exist a gap structure in $\mc N_{\la}$:
\[\|W\|> A_{\la}> A_{0}> \|w\|\; \mbox{for all} \; w\in \mc N_{\la}^{+}, W\in \mc N_{\la}^{-},\]
where
\[A_{\la}=  \left[\frac{(p-1+q)}{\la(r+q)\|b\|} (\sqrt[p]{S})^{r+1} \right]^{\frac{1}{r-p+1}}\;\mbox{and}\; A_0=\left[\frac{(r+q)}{(r-p+1)}  \|a\| \left(\frac{1}{\sqrt[p]{S}}\right)^{1-q} \right]^{\frac{1}{p+q-1}}.\]
\end{Lemma}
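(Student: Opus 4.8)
The plan is to prove the three inequalities $\|W\|>A_\la$, $A_\la>A_0$, and $A_0>\|w\|$ one at a time. The two outer bounds come from feeding the membership conditions defining $\mc N_\la^{\pm}$ into the Sobolev estimates \eqref{e2}--\eqref{e3}, while the middle gap is a monotonicity-in-$\la$ statement anchored at the threshold $\la=\La$.

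For the lower bound on $\mc N_\la^{-}$, I would take $W\in\mc N_\la^{-}$, so that $(p-1+q)\|W\|^p<\la(r+q)\io\bw$. Since $\mc N_\la^{0}=\{0\}$ we have $W\not\equiv 0$, so $\|W\|>0$. Bounding the right-hand side from above by \eqref{e3}, namely $\io\bw\le \|b\|(\|W\|/\sqrt[p]{S})^{r+1}$, and dividing through by $\|W\|^p$ isolates $\|W\|^{r-p+1}$; because $r-p+1>0$, raising to the power $1/(r-p+1)$ preserves the inequality direction and yields exactly $\|W\|>A_\la$.

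For the upper bound on $\mc N_\la^{+}$, the obstacle is that the defining inequality $(p-1+q)\|w\|^p>\la(r+q)\io\bw$ still contains $b$, whereas $A_0$ is built only from $\|a\|$. I would remove $b$ using the Nehari identity $\la\io\bw=\|w\|^p-\io\aw$ (valid since $w\in\mc N_\la$); substituting it and collecting the $\|w\|^p$ terms gives $\io\aw>\tfrac{r-p+1}{r+q}\|w\|^p>0$. Now estimating $\io\aw$ from above by \eqref{e2}, that is $\io\aw\le \|a\|(\|w\|/\sqrt[p]{S})^{1-q}$, and cancelling $\|w\|^{1-q}$ (again $w\neq 0$) leaves $\|w\|^{p-1+q}<\tfrac{r+q}{r-p+1}\|a\|(1/\sqrt[p]{S})^{1-q}$, i.e.\ $\|w\|<A_0$, using $p-1+q>0$.

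Finally, for the middle gap $A_\la>A_0$, I would observe that $A_\la=C\,\la^{-1/(r-p+1)}$ with $C>0$ independent of $\la$, so $A_\la$ is strictly decreasing in $\la$, while $A_0$ does not depend on $\la$. Hence it suffices to verify the endpoint identity $A_\La=A_0$, after which $\la\in(0,\La)$ forces $A_\la>A_\La=A_0$. I expect this identity to be the only genuine computation: substituting the explicit value of $\La$ into $A_\La^{\,r-p+1}$ and simplifying, one checks that $A_\La^{\,r-p+1}=A_0^{\,r-p+1}$, whence $A_\La=A_0$ since $r-p+1>0$. This equality is precisely the algebraic reflection of $E_\La=0$, the relation that defines $\La$ in the proof of Lemma~\ref{le1}, so the computation is guaranteed to close.
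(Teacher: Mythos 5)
Your proposal is correct and follows essentially the same route as the paper: the bound $\|w\|<A_0$ on $\mc N_\la^{+}$ via the Nehari identity plus \eqref{e2}, the bound $\|W\|>A_\la$ on $\mc N_\la^{-}$ via \eqref{e3}, and the middle gap reduced to the endpoint identity $A_\La=A_0$ (which the paper verifies by the same algebraic computation, with your monotonicity-in-$\la$ remark left implicit there).
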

\begin{proof}
If $w\in \mc N_{\la}^{+}\subset \mc N_{\la}$, then
\begin{align*}
0&< (p-1+q)\|w\|^p -\la (r+q)\io \bw \\
&= (p-1+q)\|w\|^p - (r+q)\left[ \|w\|^p -\io \aw\right]\\
&= (p-1-r) \|w\|^p + (r+q)\io \aw.
\end{align*}
Hence it follows from \eqref{e2}
\[(r-p+1) \|w\|^p < (r+q)\io \aw \leq (r+q) \|a\|\left(\frac{\|w\|}{\sqrt[p]{S}}\right)^{1-q}\]
which yields
\[\|w\|< \left[\frac{(r+q) \|a\|}{(r-p+1)} \left(\frac{1}{\sqrt[p]{S}}\right)^{1-q} \right]^{\frac{1}{p+q-1}}\equiv A_0.\]
\noi If $W\in \mc N_{\la}^{-}$, then it follows from \eqref{e3} that
\begin{align*}
(p-1+q)\|W\|^p <\la (r+q)\io b(x)W_{+}^{r+1}(x) dx \leq \la (r+q)\|b\|\left(\frac{\|W\|}{\sqrt[p]{S}}\right)^{r+1}
\end{align*}
which yields
\[\|W\|> \left[\frac{(p-1+q)}{\la(r+q)\|b\|} (\sqrt[p]{S})^{r+1} \right]^{\frac{1}{r-p+1}}\equiv A_\la.\]
Now we show that $A_{\la}=A_0$ if and only if $\la=\La$.
{\small\begin{align*}
&\la= \La = \frac{p-1+q}{r+q} \left(\frac{r-p+1}{r+q}\right)^{\frac{r-p+1}{p-1+q}}\frac{1}{\|b\|} \left(\frac{S^{r+q}}{\|a\|^{r-p+1}}\right)^{\frac{1}{p-1+q}} \\
&\Leftrightarrow A_{\la} =
\la^{-\frac{1}{r-p+1}} \left(\frac{p-1+q}{r+q}\right)^{\frac{1}{r-p+1}} \left(\frac{1}{\|b\|}\right)^{\frac{1}{r-p+1}} (\sqrt[p]{S})^{\frac{r+1}{r-p+1}}\\
&= \left(\frac{r+q}{r-p+1}\right)^{\frac{1}{p-1+q}} \|a\|^{\frac{1}{p+q-1}} (\sqrt[p]{S})^{-\frac{p(r+q)}{(p-1+q)(r-p+1)}+\frac{r+1}{r-p+1}}= \left[\frac{(r+q) \|a\|}{(r-p+1)(\sqrt[p]{S})^{1-q}} \right]^{\frac{1}{p+q-1}}\equiv A_0.
\end{align*}}

Thus for all $\la\in (0,\La)$, we can conclude that
\[\|W\|> A_{\la}> A_{0}> \|w\|\; \mbox{for all} \; w\in \mc N_{\la}^{+}, W\in \mc N_{\la}^{-}.\]
This completes the proof of the Lemma.\QED
\end{proof}

\begin{Lemma}\label{le3}
Suppose that $\la\in(0,\La)$, then $\mc N_{\la}^{-}$ is a closed set in $X_0$- topology.
\end{Lemma}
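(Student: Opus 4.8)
The plan is to prove closedness sequentially: take an arbitrary sequence $\{w_k\}\subset \mc N_{\la}^{-}$ with $w_k\to w$ in $X_0$ and show $w\in \mc N_{\la}^{-}$. The argument rests on three ingredients: continuity of the weighted integral functionals along $X_0$-convergent sequences, passage to the limit in both the Nehari constraint and the strict inequality defining $\mc N_{\la}^{-}$, and finally the gap structure of Lemma \ref{le2} together with $\mc N_{\la}^{0}=\{0\}$ to exclude the degenerate limit.

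First I would establish that $w\mapsto \io \aw$ and $w\mapsto \io \bw$ are continuous with respect to $\|\cdot\|$. Since $X_0$ embeds continuously into $L^{p^*_s}(\Om)$, convergence $w_k\to w$ in $X_0$ forces $w_k\to w$ in $L^{p^*_s}(\Om)$. Because $0<1-q<1$, the elementary inequality $|(w_k)_+^{1-q}-w_+^{1-q}|\leq |w_k-w|^{1-q}$ shows $(w_k)_+^{1-q}\to w_+^{1-q}$ in $L^{p^*_s/(1-q)}(\Om)$, whose conjugate exponent is exactly $p^*_s/(p^*_s-1+q)$, the space to which $a$ belongs by $(a1)$; H\"older then gives $\io \ak \to \io \aw$. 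Since $r+1<p^*_s$, the superposition operator $w\mapsto w_+^{r+1}$ is continuous from $L^{p^*_s}(\Om)$ into $L^{p^*_s/(r+1)}(\Om)$, whose conjugate is $p^*_s/(p^*_s-1-r)$, the space of $b$ by $(b1)$; hence $\io \bk \to \io \bw$.

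With these continuities and $\|w_k\|\to\|w\|$ (the norm is continuous), I would pass to the limit in the identity $\|w_k\|^p=\io \ak+\la \io \bk$ to obtain $\|w\|^p=\io \aw+\la \io \bw$, so $w\in\mc N_{\la}$. Passing to the limit in the defining strict inequality $(p-1+q)\|w_k\|^p<\la(r+q)\io \bk$ yields only the non-strict $(p-1+q)\|w\|^p\leq \la(r+q)\io \bw$, so a priori $w\in \mc N_{\la}^{-}\cup \mc N_{\la}^{0}$.

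The main obstacle is precisely this loss of strictness, i.e.\ ruling out $w\in \mc N_{\la}^{0}$. Here I would invoke the gap structure of Lemma \ref{le2}, which gives $\|w_k\|>A_\la>0$ for every $k$; letting $k\to\infty$ yields $\|w\|\geq A_\la>0$, so $w\not\equiv 0$. Since $\la\in(0,\La)$ forces $\mc N_{\la}^{0}=\{0\}$, the nontriviality of $w$ excludes $w\in \mc N_{\la}^{0}$, whence $w\in \mc N_{\la}^{-}$. Thus $\mc N_{\la}^{-}$ contains all its $X_0$-limits and is closed.
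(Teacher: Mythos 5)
Your proof is correct and follows essentially the same route as the paper's: pass to the limit in the Nehari constraint and in the (now non-strict) inequality defining $\mc N_{\la}^{-}$, then use the gap structure of Lemma \ref{le2} together with $\mc N_{\la}^{0}=\{0\}$ to rule out the degenerate limit. The only difference is that you spell out the continuity of $w\mapsto\io \aw$ and $w\mapsto\io \bw$ via the Sobolev embedding and H\"older's inequality, a step the paper uses implicitly.
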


\begin{proof}
Let $\{W_k\}$ be a sequence in $\mc N_{\la}^{-}$ with $W_k \ra W$ in $X_0$. Then we have
\begin{align*}
\|W\|^p &=\lim_{k\ra\infty}\|W_k\|^p\\
&= \lim_{k\ra\infty} \left[\io a(x)(W_k)_{+}^{1-q}(x) dx +\la \io b(x)(W_{k})_{+}^{r+1}(x) dx\right]\\
&= \io a(x)W_{+}^{1-q}(x) dx + \la \io b(x)W_{+}^{r+1}(x) dx
\end{align*}
and
\begin{align*}
(p-1+q) \|W\|^p -\la& (r+q) \io b(x)W_{+}^{r+1}(x) dx \\
& =\lim_{k\ra\infty} \left[(p-1+q) \|W_k\|^p -\la (r+q) \io b(x)(W_{k})_{+}^{r+1}(x) dx\right]\leq 0,
\end{align*}
i.e. $W\in \ \mc N_{\la}^{-}\cap \mc N_{\la}^{0}$. Since $\{W_k\}\subset \mc N_{\la}^{-}$, from Lemma \ref{le2} we have
\[\|W\|= \lim_{k\ra\infty} \|W_k\|\geq A_{0}>0,\]
that is, $W\not\equiv 0$. It follows from Lemma \ref{le1}, that $W\not\in \mc N_{\la}^{0}$ for any $\la\in (0,\La)$. Thus $W\in \mc N_{\la}^{-}$.
That is, $\mc N_{\la}^{-}$ is a closed set in $X_0$- topology for any $\la\in(0,\La)$.\QED

\end{proof}


\begin{Lemma}\label{le4}
 Let $w\in \mc N_{\la}^{\pm}$, then for any $\phi\in C_{X_0}$, there exists a number $\e>0$ and a continuous function $f:B_{\e}(0):=\{v\in\h : \|v\|<\e\}\ra \mb R^{+}$ such that
\[f(v)>0, f(0)=1\;\mbox{and}\; f(v)(w+v\phi)\in \mc N_{\la}^{\pm}\;\mbox{for all}\; v\in B_{\e}(0).\]
\end{Lemma}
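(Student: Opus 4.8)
\noindent The plan is to realize $\mc N_{\la}^{\pm}$ locally as the zero set of a scalar equation and apply the implicit function theorem in the single real variable $t$, treating the $\h$-variable $v$ only as a continuous parameter. For the fixed $\phi\in C_{\h}$ define, for $t>0$ and $v\in\h$,
\[
F(t,v):= \phi_{w+v\phi}^{\prime}(t)= t^{p-1}\|w+v\phi\|^p - t^{-q}\io a(x)(w+v\phi)_{+}^{1-q}(x)\,dx - \la t^{r}\io b(x)(w+v\phi)_{+}^{r+1}(x)\,dx .
\]
Since $w\in\mc N_{\la}$ we have $F(1,0)=\phi_w^{\prime}(1)=0$, and since $w\in\mc N_{\la}^{\pm}$ (so in particular $w\neq 0$, because $\mc N_{\la}^{0}=\{0\}$) we have
\[
\partial_t F(1,0)=\phi_w^{\prime\prime}(1)=(p-1+q)\|w\|^p-\la(r+q)\io\bw \gtrless 0 ,
\]
which is nonzero. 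Recalling that $t(w+v\phi)\in\mc N_{\la}$ iff $F(t,v)=0$ and that $t(w+v\phi)\in\mc N_{\la}^{\pm}$ iff in addition $\phi_{w+v\phi}^{\prime\prime}(t)\gtrless 0$, solving $F(t,v)=0$ near $(1,0)$ for $t=f(v)$ is exactly the desired normalization.

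\noindent To invoke the implicit function theorem for continuous maps I must verify that $F$ and $\partial_t F$ are jointly continuous on $(0,\infty)\times\h$. Multiplication by the fixed $\phi\in C_{\h}$ (smooth, compactly supported in $\Om$) is a bounded operation on $\h$, so $v\mapsto w+v\phi$ is continuous (indeed affine) from $\h$ into $\h$, and the embedding $\h\hookrightarrow L^{p^{*}_s}(\Om)$ gives continuity into $L^{p^{*}_s}(\Om)$. Hence $v\mapsto\|w+v\phi\|^p$ is continuous. The term $\io b(x)(w+v\phi)_{+}^{r+1}\,dx$ is continuous in $v$ because $s\mapsto s_{+}^{r+1}$ is locally Lipschitz, $w+v\phi\to w$ in $L^{p^{*}_s}$, and $b\in L^{p^{*}_s/(p^{*}_s-1-r)}(\Om)$ via H\"older, as in \eqref{e3}. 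The singular term $\io a(x)(w+v\phi)_{+}^{1-q}\,dx$ is continuous because $s\mapsto s_{+}^{1-q}$ is uniformly (H\"older) continuous, whence $(w+v\phi)_{+}^{1-q}\to w_{+}^{1-q}$ in $L^{p^{*}_s/(1-q)}$ and H\"older against $a\in L^{p^{*}_s/(p^{*}_s-1+q)}(\Om)$ closes the estimate, exactly as in \eqref{e2}. Since the $t$-dependence of $F$ and of $\partial_t F= (p-1)t^{p-2}\|w+v\phi\|^p + q t^{-q-1}\io a(w+v\phi)_+^{1-q} - \la r t^{r-1}\io b(w+v\phi)_+^{r+1}$ enters only through elementary powers of $t$ on $(0,\infty)$ with these continuous coefficients, both are jointly continuous.

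\noindent Consequently, from $F(1,0)=0$, the joint continuity of $\partial_t F$, and $\partial_t F(1,0)\neq 0$, the monotonicity of $t\mapsto F(t,v)$ near $t=1$ together with the intermediate value theorem produces $\e>0$ and a continuous $f:B_{\e}(0)\to(0,\infty)$ with $f(0)=1$ and $F(f(v),v)=0$ for all $v\in B_{\e}(0)$; after a possible shrinking of $\e$ we retain $f>0$. This says precisely $f(v)(w+v\phi)\in\mc N_{\la}$. Finally the composite $v\mapsto\phi_{w+v\phi}^{\prime\prime}(f(v))$ is continuous and equals $\phi_w^{\prime\prime}(1)\gtrless 0$ at $v=0$, so a further shrinking of $\e$ forces it to keep this strict sign on $B_{\e}(0)$; hence $f(v)(w+v\phi)\in\mc N_{\la}^{\pm}$ for every $v\in B_{\e}(0)$, as claimed.

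\noindent The step I expect to be most delicate is the singular term $\io a\,u_{+}^{1-q}$, which is not differentiable in $u$ when $1-q<1$; this is exactly why the implicit function theorem is applied only in the scalar variable $t$ (where $F$ is smooth) and why of the $\h$-variable I demand merely continuity, secured by the H\"older continuity of $s\mapsto s_{+}^{1-q}$ and the Sobolev embedding. A secondary technical point is the $\h$-boundedness of multiplication by $\phi$; I expect this to follow from the splitting $v(x)\phi(x)-v(y)\phi(y)=\phi(x)(v(x)-v(y))+v(y)(\phi(x)-\phi(y))$ together with the Lipschitz bound on $\phi$, and to be the only genuinely fractional estimate required.
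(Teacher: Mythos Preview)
Your proof is correct and follows essentially the same route as the paper: apply the implicit function theorem in the scalar variable $t$ to a function whose zero set encodes $\mc N_{\la}$, check that $\partial_t F(1,0)=\phi_w''(1)\ne 0$, and then preserve the strict sign by continuity. The only cosmetic difference is that the paper works with $\tilde F(v,t)=t^{q}F(t,v)=t^{p-1+q}\|w+v\phi\|^p-\io a(w+v\phi)_+^{1-q}-\la t^{r+q}\io b(w+v\phi)_+^{r+1}$, which clears the negative power of $t$; since $t>0$ the zero sets agree and $\partial_t\tilde F(0,1)=\phi_w''(1)$ as well, so the two formulations are interchangeable, and your version is in fact more explicit about the continuity in $v$ than the paper's.
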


\begin{proof}
We give the proof only for the case $w\in \mc N_{\la}^{+}$, the case $\mc N_{\la}^{-}$ may be preceded exactly. For any $C_{X_0}$, we define $F:\h\times \mb R^{+}\ra \mb R$ as follows:
{\small \begin{align*}
F(v,t)= t^{p-1+q}\|w+v\phi\|^p - \io a(x) (w+v\phi)_{+}^{1-q}(x) dx -\la t^{r+q}\io b(x) (w+v\phi)_{+}^{r+1}(x) dx
\end{align*}}
Since $w\in \mc N_{\la}^{+}(\subset \mc N_{\la})$, we have that
\[F(0,1)= \|w\|^p- \io \aw -\la \io \bw =0,\]
and
\[\frac{\partial F}{\partial t}(0,1)= (p-1+q)\|w\|^p -\la (r+q) \io \bw>0  .\]
Applying the implicit function Theorem at the point $(0,1),$ we have that there exists $\bar{\e}>0$ such that for $\|v\|<\bar{\e}$, $v\in X_0$, the equation $F(v,t)=0$ has a unique continuous solution $t=f(v)>0.$ It follows from $F(0,1)=0$ that $f(0)=1$ and from $F(v,f(v))=0$ for $\|v\|<\bar\e$, $v\in\h$ that
{\small \begin{align*}
 0& = f^{p-1+q}(v) \|w+v\phi\|^p - \io a(x) (w+v\phi)_{+}^{1-q}(x) dx -\la f^{r+q}(v) \io b(x) (w+v\phi)_{+}^{r+1}(x) dx\\
 & = \frac{\|f(v)(w+v\phi)\|^p - \io a(x) (f(v)(w+v\phi))_{+}^{1-q}(x) dx -\la  \io b(x) (f(v)(w+v\phi))_{+}^{r+1}(x) dx}{f^{1-q}(v)}
\end{align*}}
that is,
\[f(v)(w+v\phi)\in \mc N_{\la}\;\mbox{for all}\; v\in \h, \|v\|<\tilde{\e}.\]
Since $\frac{\partial F}{\partial t}(0,1)>0$ and

{\small \begin{align*}
 \frac{\partial F}{\partial t}(v,f(v))& = (p-1+q)f^{p-2+q}(v) \|w+v\phi\|^p - \la (r+q) f^{r+q-1}(v) \io b(x) (w+v\phi)_{+}^{r+1}(x) dx\\
 & = \frac{(p-1+q)\|f(v)(w+v\phi)\|^p -\la (r+q) \io b(x) (f(v)(w+v\phi))_{+}^{r+1}(x) dx}{f^{2-q}(v)}
\end{align*}}
we can take $\e>0$ possibly smaller $(\e<\bar\e)$ such that for any $v\in\h$, $\|v\|<\e$,
\[ (p-1+q)\|f(v)(w+v\phi)\|^p - \la (r+q)\io b(x) (f(v)(w+v\phi))_{+}^{r+1}(x) dx >0, \]
that is,
\[f(v)(w+v\phi)\in \mc N_{\la}^{+}\;\mbox{for all}\; v\in B_{\e}(0).\]
This completes the proof of Lemma. \QED
\end{proof}

\begin{Lemma}\label{le5}
$J_\la$ is bounded below and coercive on $\mc N_{\la}$.
\end{Lemma}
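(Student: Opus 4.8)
The plan is to exploit the Nehari constraint defining $\mc N_\la$ to rewrite $J_\la$ in a form that no longer contains the troublesome sign-changing term. For $w\in\mc N_\la$ we have $\|w\|^p = \io\aw + \la\io\bw$, so that $\la\io\bw = \|w\|^p - \io\aw$. Substituting this into $J_\la$ gives
\[J_\la(w) = \left(\frac1p - \frac1{r+1}\right)\|w\|^p - \left(\frac1{1-q} - \frac1{r+1}\right)\io\aw.\]
The purpose of this step is that the term involving $b$, which cannot be controlled from below because $b$ changes sign, has been absorbed into $\|w\|^p$ together with the manageable $a$-term.

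Next I would verify that both coefficients above are strictly positive. Since $r>p-1$ we have $r+1>p$, hence $\frac1p-\frac1{r+1}>0$; and since $0<q<1$ gives $1-q<1<r+1$, we have $\frac1{1-q}-\frac1{r+1}>0$. Thus on $\mc N_\la$ the functional $J_\la(w)$ is a positive multiple of $\|w\|^p$ minus a positive multiple of $\io\aw$.

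Then I would estimate $\io\aw$ from above by the H\"older--Sobolev bound \eqref{e2}, namely $\io\aw \le \|a\|\bigl(\|w\|/\sqrt[p]{S}\bigr)^{1-q}$, to obtain
\[J_\la(w) \ge \left(\frac1p - \frac1{r+1}\right)\|w\|^p - \left(\frac1{1-q} - \frac1{r+1}\right)\|a\|\left(\frac{1}{\sqrt[p]{S}}\right)^{1-q}\|w\|^{1-q}.\]
Because $1-q<1<p$, the right-hand side is a function of $\|w\|$ of the form $C_1 t^p - C_2 t^{1-q}$ with $C_1,C_2>0$. This tends to $+\infty$ as $t=\|w\|\to\infty$, which yields coercivity; and being continuous on $[0,\infty)$ with limit $+\infty$, it attains a finite minimum, which provides the desired lower bound.

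The only genuine subtlety is the first step: since $b$ is sign-changing one cannot bound $J_\la$ below by simply discarding or crudely estimating the $b$-term, so the Nehari relation must be used to trade it for $\|w\|^p-\io\aw$. Once this is done the conclusion rests on the elementary fact that the superlinear power $\|w\|^p$ dominates the sublinear power $\|w\|^{1-q}$ at infinity, so no further estimate on $b$ is needed.
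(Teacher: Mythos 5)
Your proof is correct and follows essentially the same route as the paper: both use the Nehari constraint to eliminate the sign-changing $b$-term, apply the H\"older--Sobolev bound \eqref{e2} on $\io\aw$, and reduce the claim to the elementary fact that $C_1 t^p - C_2 t^{1-q}$ with $C_1,C_2>0$ and $p>1>1-q$ is coercive and bounded below. The only difference is cosmetic: the paper computes the minimum of this one-variable function explicitly, whereas you simply invoke continuity and the limit at infinity to conclude a finite infimum, which suffices.
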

\begin{proof}
For $w\in \mc N_{\la}$, we obtain from \eqref{e2} that
\begin{align}\label{s1}
J_{\la}(w)
&= \left(\frac{1}{p} -\frac{1}{r+1}\right)\|w\|^p - \left(\frac{1}{1-q} -\frac{1}{r+1}\right)\io\aw\notag\\
&\geq \left(\frac{1}{p} -\frac{1}{r+1}\right)\|w\|^p - \left(\frac{1}{1-q} -\frac{1}{r+1}\right) \|a\| \left(\frac{\|w\|}{\mathrm{\sqrt[p]{S}}}\right)^{1-q}.
\end{align}
Now consider the function $\rho: \mb R^{+}\ra \mb R$ as $\rho(t)= \al t^p - \ba t^{1-q}$, where $\al,$ $\ba$ are both positive constants.
One can easily show that $\rho$ is convex($\rho^{\prime\prime}(t)>0$ for all $t>0$) with $\rho(t)\ra 0$ as $t\ra 0$ and $\rho(t)\ra \infty$ as $t\ra\infty$. $\rho$ achieves its minimum at $t_{min}=[\frac{\ba(1-q)}{p\al}]^{\frac{1}{p-1+q}}$ and
\begin{align*}
\rho(t_{min})&= \al\left[\frac{\ba(1-q)}{p\al}\right]^{\frac{p}{p-1+q}}- \ba\left[\frac{\ba(1-q)}{p\al}\right]^{\frac{1-q}{p-1+q}}= -\frac{(p-1+q)}{p} \ba^{\frac{p}{p-1+q}}\left(\frac{1-q}{p\al}\right)^{\frac{1-q}{p-1+q}}.
\end{align*}
Applying $\rho(t)$ with $\al=\left(\frac{1}{p} -\frac{1}{r+1}\right)$, $\ba=\left(\frac{1}{1-q} -\frac{1}{r+1}\right)\|a\| \left(\frac{1}{\sqrt[p]{S}}\right)^{1-q}$ and $t=\|w\|$, $w\in \mc N_{\la}$, we obtain from \eqref{s1} that
\[\lim_{\|w\|\ra\infty} J_{\la}(w)\geq \lim_{t\ra\infty} \rho(t) = \infty,\]
since $0<q<1\leq p-1$. That is $J_{\la}$ is coercive on $\mc N_{\la}$. Moreover it follows from \eqref{s1} that
\begin{align}\label{a1}
J_{\la}(w)\geq \rho(t)\geq \rho(t_{min})(\mbox{a constant}),
\end{align}
i.e
{\small \[J_{\la}(w)\geq -\frac{(p-1+q)}{p} \ba^{\frac{p}{p-1+q}}\left(\frac{1-q}{p\al}\right)^{\frac{1-q}{p-1+q}}= -\frac{(p-1+q)(r+1-p)}{(1-q)(r+1)} \left(\frac{r+q}{p(r+1-p)}\right)^{\frac{p}{p-1+q}}.\]}
Thus $J_{\la}$ is bounded below on $\mc N_{\la}$.\QED

\end{proof}
\section{Existence of Solutions in $\mc N_{\la}^{\pm}$}

Now from Lemma \ref{le3}, $\mc N_{\la}^{+}\cup \mc N_{\la}^{0}$ and $\mc N_{\la}^{-}$ are two closed sets in $\h$ provided $\la\in(0,\La)$. Consequently, the Ekeland variational principle can be applied to the problem of finding the infimum of $J_{\la}$ on both $\mc N_{\la}^{+}\cup \mc N_{\la}^{0}$ and $\mc N_{\la}^{-}$.
First, consider $\{w_k\}\subset\mc N_{\la}^{+}\cup \mc N_{\la}^{0}$  with the following properties:
\begin{align}
J_{\la}(w_k)&< \inf_{w\in\mc N_{\la}^{+}\cup \mc N_{\la}^{0}} J_{\la}(w) +\frac{1}{k}\label{c1}\\
J_{\la}(w)&\geq J_{\la}(w_k) -\frac{1}{k} \|w-w_k\|,\;\mbox{for all}\; w\in \mc N_{\la}^{+} \cup \mc N_{\la}^{0}\label{c2}.
\end{align}
\begin{Lemma}
Show that the sequence $\{w_k\}$ is bounded in $\mc N_{\la}$. Moreover, there exists $0\not\equiv w\in X_0$ such that $w_k\rightharpoonup w$ weakly in $X_0$.
\end{Lemma}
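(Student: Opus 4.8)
The plan is to treat the three assertions—boundedness, extraction of a weak limit, and nontriviality of that limit—separately, the first two being routine consequences of coercivity and reflexivity while nontriviality is the crux. For boundedness, note that since $\{w_k\}\subset\mc N_{\la}^{+}\cup\mc N_{\la}^{0}\subset\mc N_{\la}$, property \eqref{c1} gives $J_{\la}(w_k)<\inf_{\mc N_{\la}^{+}\cup\mc N_{\la}^{0}}J_{\la}+1$, so $\{J_{\la}(w_k)\}$ is bounded above; by Lemma \ref{le5}, $J_{\la}$ is coercive on $\mc N_{\la}$, so an unbounded subsequence would force $J_{\la}(w_k)\ra+\infty$, a contradiction. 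Hence $\{w_k\}$ is bounded in $X_0$, and by reflexivity, after passing to a subsequence (not relabelled), $w_k\rightharpoonup w$ in $X_0$ for some $w\in X_0$; by the compact embedding $X_0\hookrightarrow L^{\ga}(\Om)$, $\ga<p^{*}_s$, we may also assume $w_k\ra w$ in $L^{\ga}(\Om)$ and $w_k\ra w$ a.e. in $\Om$.

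The essential point is $w\not\equiv0$, and I would first record that $c:=\inf_{\mc N_{\la}^{+}\cup\mc N_{\la}^{0}}J_{\la}<0$. Since $a>0$, choose $w_0\in C_{X_0}$ with $\io a(x)(w_0)_{+}^{1-q}(x)dx>0$; in $\phi_{w_0}(t)=J_{\la}(tw_0)$ the term $-\frac{t^{1-q}}{1-q}\io a(x)(w_0)_{+}^{1-q}(x)dx$ dominates as $t\ra0^{+}$, because $1-q<p$ and $1-q<r+1$, so $\phi_{w_0}(t)<0$ for all small $t>0$. By Lemma \ref{le1} there is $t_1>0$ with $t_1w_0\in\mc N_{\la}^{+}$ minimizing $J_{\la}(tw_0)$ over the relevant range of $t$, whence $J_{\la}(t_1w_0)<0$ and $c<0$. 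Using $\mc N_{\la}^{0}=\{0\}$ and $J_{\la}(0)=0$ we also get $c=\inf_{\mc N_{\la}^{+}}J_{\la}$, and by \eqref{c1} and the definition of the infimum, $J_{\la}(w_k)\ra c<0$.

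To finish I would argue by contradiction, assuming $w\equiv0$. The decisive step is $\io\ak\ra0$ and $\io\bk\ra0$. The natural H\"older pairings place the $w_k$-factors at the critical exponent $p^{*}_s$, so weak convergence alone does not suffice; instead I would extract uniform integrability from the integrability of the weights. Indeed, for measurable $E\subset\Om$, H\"older gives $\int_E|a|(w_k)_{+}^{1-q}dx\le\|a\chi_E\|\,\|w_k\|_{L^{p^{*}_s}}^{1-q}$, where $\|a\chi_E\|$ is the weight norm: the second factor is bounded while the first tends to $0$ as $|E|\ra0$ uniformly in $k$ by absolute continuity of the integral, and similarly for $b$. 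Combined with $w_k\ra0$ a.e., Vitali's theorem on the finite-measure set $\Om$ yields the two limits. Since $w_k\in\mc N_{\la}$, i.e. $\|w_k\|^p=\io\ak+\la\io\bk$, this forces $\|w_k\|^p\ra0$ and hence $J_{\la}(w_k)\ra0$, contradicting $J_{\la}(w_k)\ra c<0$. Therefore $w\not\equiv0$.

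The step I expect to be the main obstacle is precisely this last convergence: both the singular term and the superlinear term pair against the non-compact critical Sobolev embedding, so one cannot rely on compactness of $X_0\hookrightarrow L^{p^{*}_s}$; the argument must instead obtain uniform integrability from the Lebesgue integrability of $a$ and $b$ and invoke Vitali's theorem rather than dominated convergence.
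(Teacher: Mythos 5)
Your proof is correct, and its first two parts coincide with the paper's: boundedness follows from the Ekeland bound \eqref{c1} together with the coercivity estimate of Lemma \ref{le5} (the paper phrases this via the function $\rho$ from \eqref{a1}), and the weak limit comes from reflexivity plus the compact embedding into subcritical Lebesgue spaces. Where you genuinely diverge is the nontriviality of $w$. Both arguments hinge on $\inf_{\mc N_{\la}^{+}\cup\mc N_{\la}^{0}}J_{\la}<0$, but you establish it by exhibiting one negative-energy point (small-$t$ behaviour of the fiber map plus Lemma \ref{le1}), while the paper proves the stronger fact that $J_{\la}(w)<0$ for \emph{every} $w\in\mc N_{\la}^{+}$, by combining the Nehari constraint with the inequality $\la\io\bw<\frac{p-1+q}{r+q}\|w\|^p$ defining $\mc N_{\la}^{+}$. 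The paper then concludes directly: by weak lower semicontinuity of the norm, $J_{\la}(w)\le\liminf_{k}J_{\la}(w_k)=\inf J_{\la}<0$, hence $w\not\equiv 0$. Note, however, that this step tacitly requires $\io\ak\ra\io\aw$ and $\io\bk\ra\io\bw$, which weak lower semicontinuity alone does not give, precisely because the H\"older pairing of the weights is against the critical exponent $p^{*}_s$; the paper does not justify this. Your contradiction argument supplies exactly that missing ingredient (in the case $w\equiv 0$): uniform integrability of $a(w_k)_{+}^{1-q}$ and $b(w_k)_{+}^{r+1}$, obtained from the Lebesgue integrability of the weights and the Sobolev bound on $\|w_k\|_{L^{p^{*}_s}}$, together with a.e. convergence and Vitali's theorem, forces $\|w_k\|^p=\io\ak+\la\io\bk\ra 0$ and so $J_{\la}(w_k)\ra 0$, contradicting $J_{\la}(w_k)\ra c<0$. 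So your route is slightly longer but more self-contained and rigorous on the compactness issue, whereas the paper's route, once convergence of the weighted terms is granted, yields the additional information $J_{\la}(w)\le\inf_{\mc N_{\la}^{+}}J_{\la}$, which is convenient later when one shows that $w$ actually attains the infimum.
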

\begin{proof}
From equations \eqref{a1} and \eqref{c1}, we have
\[a t^p -b t^{1-q} = \rho(t)\leq J_{\la}(w)<\inf_{w\in\mc N_{\la}^{+}\cup \mc N_{\la}^{0}} J_{\la}(w) +\frac{1}{k}\leq C_5,\]
for sufficiently large $k$ and a suitable positive constant.
Hence putting $t=w_k$ in the above equation, we obtain $\{w_k\}$ is bounded.

Let $\{w_k\}$ is bounded in $\h$. Then, there exists a subsequence of $\{w_k\}_k$, still denoted by $\{w_k\}_k$ and $w\in \h$ such that
$w_k\rightharpoonup w\;\mbox{ weakly in}\; \h$, $w_k(\cdot)\ra w(\cdot)$ strongly in $L^{r}(\Om)$ for $1\leq  r<p^{*}_s$ and $w_k(\cdot)\ra w(\cdot)$ a.e. in $\Om$.

\noi For any $w\in \mc N_{\la}^{+}$, we have from $0<q<1\leq p-1<r$ that
\begin{align*}
J_{\la}(w)
&= \left(\frac{1}{p} -\frac{1}{1-q}\right)\|w\|^p + \left(\frac{1}{1-q} -\frac{1}{r+1}\right)\la\io\bw\\
&< \left(\frac{1}{p} -\frac{1}{1-q}\right)\|w\|^p + \left(\frac{1}{1-q} -\frac{1}{r+1}\right)\frac{p-1+q}{r+q}\|w\|^p\\
&= \left(\frac{1}{r+1} -\frac{1}{p}\right)\frac{(p-1+q)}{(1-q)}\|w\|^p<0,
\end{align*}
which means that $\inf_{\mc N_{\la}^{+}} J_{\la}<0$. Now for $\la\in(0,\La)$, we know from Lemma \ref{le1}, that $\mc N_{\la}^{0}=\{0\}$.
Together, these imply that $w_k \in \mc N_{\la}^{+}$ for $k$ large and
\[\inf_{w\in\mc N_{\la}^{+}\cup\mc N_{\la}^{0}}J_{\la}(w)= \inf_{w\in\mc N_{\la}^{+}} J_{\la}(w)<0.\]
Therefore, by weak lower semi-continuity of norm,
\[J_{\la}(w) \leq \liminf_{k\ra\infty} J_{\la}(w_k) = \inf_{\mc N_{\la}^{+}\cup\mc N_{\la}^{0}}J_{\la}<0, \]
that is, $w\not\equiv 0$ and $w\in X_0$.\QED
\end{proof}
\begin{Lemma}\label{le6}
Suppose $w_k\in \mc N_{\la}^{+}$ such that $w_k\rightharpoonup w$ weakly in $\h$. Then for $\la\in(0,\La)$,
\begin{align}\label{s2}
(p-1+q) \io a(x)w_{+}^{1-q}(x) dx -\la (r-p+1) \io b(x)w_{+}^{r+1}(x) dx >0.
\end{align}
Moreover, there exists a constant $C_2>0$ such that
\begin{align}\label{s5}
(p-1+q) \|w_{k}\|^p -\la (r+q) \io \bk \geq C_2 >0.
\end{align}
\end{Lemma}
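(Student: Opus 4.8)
The plan is to handle both \eqref{s2} and \eqref{s5} through a single quantity. Since each $w_k\in\mc N_{\la}^{+}\subset\mc N_{\la}$, I would first use the membership relation $\|w_k\|^p=\io\ak+\la\io\bk$ to rewrite the left side of \eqref{s5} as
\[
(p-1+q)\|w_k\|^p-\la(r+q)\io\bk=(p-1+q)\io\ak-\la(r-p+1)\io\bk=:h_k .
\]
The defining property of $\mc N_{\la}^{+}$, namely $\phi_{w_k}^{\prime\prime}(1)>0$, is exactly $h_k>0$ for every $k$. Thus \eqref{s5} amounts to $\inf_k h_k>0$, while \eqref{s2} is the assertion that $h_k$ has positive limit; both reduce to controlling $h_k$ and its limit.

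Next I would pass to the limit in the two integrals. Using the compact embedding $\h\hookrightarrow L^{\rho}(\Om)$ for $\rho<p^*_s$, we have (along a subsequence) $w_k\to w$ a.e. with $\{w_k\}$ bounded in $L^{p^*_s}(\Om)$, so $(w_k)_+^{1-q}\to w_+^{1-q}$ and $(w_k)_+^{r+1}\to w_+^{r+1}$ a.e. For any measurable $E\subset\Om$, H\"older gives $\int_E|a|(w_k)_+^{1-q}\,dx\le\|a\|_{L^{m_a}(E)}\|(w_k)_+\|_{L^{p^*_s}}^{1-q}$ with $m_a=\frac{p^*_s}{p^*_s-1+q}$, and similarly for $b$ with $m_b=\frac{p^*_s}{p^*_s-1-r}$. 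As $\|a\|_{L^{m_a}(E)},\|b\|_{L^{m_b}(E)}\to0$ when $|E|\to0$ uniformly in $k$, the families are equi-integrable, and Vitali's theorem yields $\io\ak\to\io\aw$ and $\io\bk\to\io\bw$. Hence $h_k\to g(w):=(p-1+q)\io\aw-\la(r-p+1)\io\bw$, and passing to the limit in $h_k>0$ gives $g(w)\ge0$.

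The hard part will be upgrading this to the strict inequality $g(w)>0$, since the strict sign can be lost in the limit; this is where I expect the main obstacle. I would argue by contradiction, assuming $g(w)=0$. First, $w\not\equiv0$ (from the previous lemma) forces $w_+\not\equiv0$: otherwise $\io\aw=\io\bw=0$, and weak lower semicontinuity together with $w_k\in\mc N_{\la}$ give $\|w\|^p\le\lim(\io\ak+\la\io\bk)=0$, so $w=0$, a contradiction. Thus $\io\aw>0$, and $g(w)=0$ forces $\io\bw>0$. Now Lemma \ref{le1}$(ii)$ applies to $w$, giving $0<t_1<t_{max}<t_2$ with $t_1w\in\mc N_{\la}^{+}$ and $J_{\la}(t_1w)=\inf_{0\le t\le t_{max}}J_{\la}(tw)$. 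Combining $\|w\|^p\le\io\aw+\la\io\bw$ with the relation $\la(r-p+1)\io\bw=(p-1+q)\io\aw$ coming from $g(w)=0$, one finds $\|w\|^p\le\frac{r+q}{r-p+1}\io\aw$, which is precisely $t_{max}\ge1$. Hence $t=1\in[0,t_{max}]$, so $J_{\la}(t_1w)\le J_{\la}(w)$; but $t_1w\in\mc N_{\la}^{+}$ gives $J_{\la}(t_1w)\ge\inf_{\mc N_{\la}^{+}}J_{\la}$, and the previous lemma gives $J_{\la}(w)\le\liminf_kJ_{\la}(w_k)=\inf_{\mc N_{\la}^{+}}J_{\la}$. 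Chaining these forces $J_{\la}(w)=J_{\la}(t_1w)=\inf_{0\le t\le t_{max}}J_{\la}(tw)$; since $\phi_{w}$ is strictly decreasing on $(0,t_1)$ and strictly increasing on $(t_1,t_2)$, this minimum is attained only at $t_1$, so $t_1=1$ and $w\in\mc N_{\la}^{+}$. But $w\in\mc N_{\la}^{+}$ means $(p-1+q)\|w\|^p-\la(r+q)\io\bw>0$, i.e.\ $g(w)>0$, contradicting $g(w)=0$. Therefore $g(w)>0$, which is \eqref{s2}.

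Finally, for \eqref{s5} I would note that the entire argument above produces a strictly positive limit for the weak limit of \emph{any} subsequence of the minimizing sequence $\{w_k\}$ (each such subsequence is again minimizing, so its weak limit $\tilde w$ is nonzero and satisfies $g(\tilde w)>0$). Consequently $\liminf_k h_k>0$, and since $h_k>0$ for every $k$, we may take $C_2:=\inf_k h_k>0$; by the identity of the first step this is exactly \eqref{s5}. The two points demanding care are the Vitali passage (so that $g(w)$ is genuinely the limit and $\|w\|^p\le\io\aw+\la\io\bw$ holds) and the correct location of $t=1$ relative to $t_{max}$, where the hypothesis $\la<\La$ and the bound $t_{max}\ge1$ enter decisively.
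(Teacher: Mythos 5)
Your proof is correct, and its outer skeleton --- rewriting both quantities through the Nehari identity, passing to the limit in the two integrals, obtaining the weak inequality, and then arguing by contradiction --- matches the paper's; but the heart of the matter, upgrading $g(w)\ge 0$ to $g(w)>0$, is done by a genuinely different route. The paper stays computational: assuming equality, weak lower semicontinuity of the norm gives $\|w\|^p\le \io\aw+\la\io\bw$, hence the two bounds $\io\aw\ge\frac{r-p+1}{r+q}\|w\|^p$ and $\la\io\bw\ge\frac{p-1+q}{r+q}\|w\|^p$, and substituting these into $0<E_{\la}\|w\|^{r+1}\le \frac{(p-1+q)}{(r+q)}\left(\frac{r-p+1}{r+q}\right)^{\frac{r-p+1}{p-1+q}}\frac{\|w\|^{\frac{p(r+q)}{p-1+q}}}{[\io\aw]^{\frac{r-p+1}{p-1+q}}}-\la\io\bw$ (the same algebra as in \eqref{e40}) makes the right-hand side exactly $0$, a contradiction; the only external inputs are $\la<\La$ and $w\not\equiv 0$ from the preceding lemma. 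You instead exploit the fibering-map geometry: from $g(w)=0$ you deduce $\io\aw>0$ and $\io\bw>0$, invoke Lemma \ref{le1}(ii), prove $t_{max}\ge 1$, and use the Ekeland property \eqref{c1} (via $\liminf_k J_{\la}(w_k)=\inf_{\mc N_{\la}^{+}}J_{\la}$) to collapse the chain $\inf_{\mc N_{\la}^{+}}J_{\la}\le J_{\la}(t_1w)\le J_{\la}(w)\le \inf_{\mc N_{\la}^{+}}J_{\la}$, forcing $t_1=1$, i.e.\ $w\in\mc N_{\la}^{+}$ and so $g(w)>0$. Both arguments are valid, and your Vitali justification of $\io\ak\to\io\aw$, $\io\bk\to\io\bw$ makes explicit a convergence the paper merely asserts. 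What the paper's route buys is generality: it applies to any sequence in $\mc N_{\la}^{+}$ converging weakly to a nonzero limit, without knowing the sequence is minimizing, and precisely this is reused in Lemma \ref{lm1}, where the same computation is repeated for the $\mc N_{\la}^{-}$ minimizing sequence to obtain \eqref{s11}; there your chain would not close, because the sup-characterization $J_{\la}(t_2W)=\sup_{t\ge t_1}J_{\la}(tW)$ only yields inequalities pointing the same way as $J_{\la}(W)\le\inf_{\mc N_{\la}^{-}}J_{\la}\le J_{\la}(t_2W)$, with nothing to force equality. What your route buys is conceptual clarity --- no algebra, and the contradiction is simply that the weak limit must itself land on $\mc N_{\la}^{+}$ --- at the price of being tied to the Ekeland minimizing sequence.
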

\begin{proof}
For $\{w_k\}\subset \mc N_{\la}^{+}(\subset \mc N_{\la})$, we have
\begin{align*}
&(p-1+q) \io a(x)w_{+}^{1-q}(x) dx -\la (r-p+1) \io b(x)w_{+}^{r+1}(x) dx\\
= &\lim_{k\ra\infty} \left[(p-1+q)\io\ak -\la(r-p+1) \io \bk \right]\\
=&\lim_{k\ra\infty}\left[(p-1+q)\|w_k\|^p - \la(r+q) \io\bk\right]\geq 0.
\end{align*}
Now, we can argue by a contradiction and assume that
\begin{align}\label{s3}
(p-1+q) \io a(x)w_{+}^{1-q}(x) dx -\la (r-p+1) \io b(x)w_{+}^{r+1}(x) dx = 0.
\end{align}
Using $w_k\in \mc N_{\la}$, the weak lower semi continuity of norm and \eqref{s3} we have that
\begin{align*}
0= &\lim_{k\ra\infty} \left[\|w_k\|^p- \io\ak -\la \io \bk \right]\\
\geq & \|w\|^p- \io a(x)w_{+}^{1-q}(x) dx -\la  \io b(x)w_{+}^{r+1}(x) dx\\
=&\left\{\begin{array}{lr}
\|w\|^p- \la \frac{r+q}{p-1+q}\io b(x)w_{+}^{r+1}(x) dx\\
\|w\|^p-  \frac{r+q}{r-p+1}\io a(x)w_{+}^{1-q}(x) dx.
\end{array}\right.
\end{align*}
Thus for any $\la\in(0,\La)$ and $w\not\equiv 0$, by similar arguments as those in \eqref{e40} we have that
\begin{align*}
0&< E_{\la}  \|w\|^{r+1}\\
&\leq \frac{(p-1+q)}{(r+q)} \left(\frac{r-p+1}{r+q}\right)^{\frac{r-p+1}{p-1+q}} \frac{\|w\|^{\frac{p(r+q)}{p-1+q}}}{\left[\io a(x)w_{+}^{1-q}(x) dx\right]^{\frac{r-p+1}{p-1+q}}}-
\la \io b(x)w_{+}^{r+1}(x) dx \\
&= \frac{(p-1+q)}{(r+q)} \left(\frac{r-p+1}{r+q}\right)^{\frac{r-p+1}{p-1+q}}\frac{\|w\|^{ \frac{p(r+q)}{p-1+q}}}{\left(\frac{r-p+1}{r+q}\|w\|^p\right)^{\frac{r-p+1}{p-1+q}}}-\frac{(p-1+q)}{(r+q)}\|w\|^p=0,
\end{align*}
which is clearly impossible. Now by \eqref{s2}, we have that
\begin{align}\label{s4}
(p-1+q) \io \ak -\la (r-p+1) \io \bk \geq C_2
\end{align}
for sufficiently large $k$ and a suitable positive constant $C_2$. This, together with the fact that $w_k\in \mc N_{\la}$ we obtain equation \eqref{s5}.\QED
\end{proof}

\noi Fix $\phi\in C_{X_0}$ with $\phi\geq 0$. Then we apply Lemma \ref{le4} with $w=w_k\in \mc N_{\la}^{+}$ ($k$ large enough such that $\frac{(1-q)C_1}{k}<C_2$), we obtain a sequence of functions $f_k: B{\e_k}(0)\ra \mb R$ such that
  $f_{k}(0)=1$ and $f_{k}(w)(w_k+ w\phi)\in \mc N_{\la}^{+}$ for all $w\in B_{\e_k}(0)$. It follows from $w_k\in \mc N_{\la}$ and $f_{k}(w)(w_k+ w\phi)\in \mc N_{\la}$ that
\begin{align}\label{f1}
\|w_k\|^p -\io\ak -\la \io\bk=0
\end{align}
and
{\small\begin{align}\label{f2}
f_{k}^{p}(w)\|w_k+w\phi\|^p - f_{k}^{1-q}(w)\io a(x)(w_k+w\phi)_{+}^{1-q} dx-\la f_{k}^{r+1}(w)\io b(x)(w_k+w\phi)_{+}^{r+1} dx =0.
\end{align}}
 Choose $0<\rho <\e_k$, and $w=\rho v$  with $\|v\|<1$ then we find $f_{k}(w)$ such that $f_{k}(0)=1$ and $f_{k}(w)(w_k+ w\phi)\in \mc N_{\la}^{+}$ for all $w\in B_{\rho}(0)$.
 Also we will use the following notation:
\[w_{k}(x,y):= |w_{k}(x)-w_{k}(y)|^{p-2}(w_k(x)-w_k(y)).\]
\begin{Lemma}\label{le7}
For $\la\in (0,\La)$ we have $|\langle f^{\prime}_{k}(0), v \rangle|$ is finite for every $0\leq v\in C_{X_0}$ with $\|v\|\leq1$.
\end{Lemma}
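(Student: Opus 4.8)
The plan is to obtain $\langle f_k'(0),v\rangle$ by differentiating the implicit relation \eqref{f2} along the ray $\rho\mapsto\rho v$ at $\rho=0$. Recall from the proof of Lemma \ref{le4} the auxiliary function
\[F(\rho v,t)= t^{p-1+q}\|w_k+\rho v\phi\|^p -\io a(x)(w_k+\rho v\phi)_+^{1-q}dx -\la t^{r+q}\io b(x)(w_k+\rho v\phi)_+^{r+1}dx,\]
so that $F(\rho v,f_k(\rho v))=0$ with $f_k(0)=1$ and $g(\rho):=f_k(\rho v)$ is continuous near $0$. The decisive quantitative input is that, since $w_k\in\mc N_{\la}^{+}$, estimate \eqref{s5} gives
\[\frac{\pa F}{\pa t}(0,1)=(p-1+q)\|w_k\|^p-\la(r+q)\io\bk\geq C_2>0.\]
This is exactly the point at which $\la\in(0,\La)$ and the sign condition defining $\mc N_{\la}^{+}$ enter: the denominator appearing below is bounded away from $0$, so the size of $\langle f_k'(0),v\rangle$ is governed solely by the numerator.

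Differentiating $F(\rho v,g(\rho))=0$ in $\rho$ at $\rho=0$ and solving for $g'(0)$ gives the working formula
\[\langle f_k'(0),v\rangle=\frac{-p\int_Q\frac{w_k(x,y)\big((v\phi)(x)-(v\phi)(y)\big)}{|x-y|^{n+ps}}dxdy+(1-q)\io a(x)(w_k)_+^{-q}v\phi\,dx+\la(r+1)\io b(x)(w_k)_+^{r}v\phi\,dx}{(p-1+q)\|w_k\|^p-\la(r+q)\io\bk},\]
where I have used $g(0)=1$ and the standard derivative of the Gagliardo seminorm. Here $v\phi\in C_{X_0}\subset X_0$ since $v,\phi\in C_{X_0}$. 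With the denominator $\geq C_2>0$ from the first step, it remains only to show that each of the three numerator terms is finite.

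The first numerator term is estimated by Hölder's inequality on $Q$ by $p\|w_k\|^{p-1}\|v\phi\|$, which is finite because $w_k\in X_0$ and $v\phi\in X_0$. The third term is finite by Hölder's inequality and the Sobolev embedding: with $b\in L^{\frac{p^{*}_s}{p^{*}_s-1-r}}(\Om)$, $(w_k)_+^{r}\in L^{p^{*}_s/r}(\Om)$ and $v\phi$ bounded, the product is integrable since $r+1<p^{*}_s$. The genuine difficulty, and the heart of the lemma, is the singular middle term $\io a(x)(w_k)_+^{-q}v\phi\,dx$: Hölder is useless here because $(w_k)_+^{-q}$ need not belong to any Lebesgue space, and it is precisely at $\rho=0$, on the set where $w_k$ vanishes, that $\rho\mapsto\io a(w_k+\rho v\phi)_+^{1-q}dx$ can fail to be differentiable. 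I would exploit $v\phi\geq0$: for $\rho>0$ the integrand increases in $\rho$, so by the monotone convergence theorem the right difference quotient converges to $(1-q)\io a(x)(w_k)_+^{-q}v\phi\,dx$, and I would show this limit is finite using the concavity of $t\mapsto t_+^{1-q}$ together with the positivity of $w_k$ in $\Om$ (so that the contribution of $\{w_k=0\}$ is negligible). Controlling this singular integral is where all the work lies; once it is shown to be finite, the displayed formula gives $|\langle f_k'(0),v\rangle|<\infty$ immediately, and I would record that the derivative is to be read as the one-sided (right) directional derivative forced by the singularity.
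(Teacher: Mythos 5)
Your plan hinges on an explicit formula for $\langle f_k'(0),v\rangle$ obtained by differentiating $F(\rho v,f_k(\rho v))=0$ at $\rho=0$, and that is exactly where the argument breaks down. To write that formula you must differentiate $\rho\mapsto \io a(x)(w_k+\rho v\phi)_+^{1-q}dx$ at $\rho=0$ and identify the derivative with $(1-q)\io a(x)(w_k)_+^{-q}v\phi\,dx$; this presupposes that this singular integral is finite (and that differentiation under the integral is legitimate). In the paper's logical scheme that finiteness is not available at this stage: it is proved only in Lemma \ref{le8} (via Fatou's lemma), and that proof uses the conclusion of Lemma \ref{le7} --- the bound $|\langle f_k'(0),v\rangle|\leq C_3$ --- as an input. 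So your route is circular. Your proposed repair (monotone convergence plus ``positivity of $w_k$ in $\Om$'') does not close the gap: the $w_k$ are merely members of a minimizing sequence in $\mc N_{\la}^{+}$ produced by Ekeland's principle and are \emph{not} known to be positive --- positivity is established only for the weak limit $w$, in Lemma \ref{le9}, and even that requires Corollary \ref{cc1}, which again sits downstream of Lemma \ref{le7}. Moreover, even a.e.\ positivity of $w_k$ would not force $\io a(x)(w_k)_+^{-q}v\phi\,dx<\infty$, since $w_k$ may degenerate to zero inside $\Om$ or at $\pa\Om$; monotone convergence only shows the right difference quotient converges in $[0,+\infty]$, and cannot by itself rule out the value $+\infty$. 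Note also that the implicit function theorem in Lemma \ref{le4} only delivers continuity of $f_k$, not differentiability, precisely because $F$ fails to be $C^1$ in its first argument at points where $w_k$ vanishes.

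The paper's proof is engineered to avoid ever touching the singular term, by proving two one-sided bounds. For $\langle f_k'(0),v\rangle\neq-\infty$ it subtracts \eqref{f1} from \eqref{f2} and uses the sign information \eqref{r1} (since $v\phi\geq 0$, the difference $(w_k+\rho v\phi)_+^{1-q}-(w_k)_+^{1-q}$ is nonnegative) to \emph{discard} the singular term with the correct inequality sign; dividing by $\rho$ and letting $\rho\to 0$ gives \eqref{s6}, where the coefficient of $\langle f_k'(0),v\rangle$ is $(p-1+q)\|w_k\|^p-\la(r+q)\io\bk\geq C_2>0$ by \eqref{s5} --- this is the part of the mechanism you did identify. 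For $\langle f_k'(0),v\rangle\neq+\infty$ it argues by contradiction using the Ekeland inequality \eqref{c2} with $w=f_k(\rho v)(w_k+\rho v\phi)$, together with the second key device missing from your proposal: since both $w_k$ and $f_k(\rho v)(w_k+\rho v\phi)$ lie in $\mc N_{\la}$, the constraint lets one rewrite $J_{\la}$ on $\mc N_{\la}$ as $\left(\frac{1}{p}-\frac{1}{1-q}\right)\|u\|^p+\la\left(\frac{1}{1-q}-\frac{1}{r+1}\right)\io b(x)u_+^{r+1}dx$, so the $a$-integral is eliminated identically; this yields \eqref{s8} and the contradiction. Your proposal never invokes \eqref{c2}, and without it there is no mechanism at all to bound the difference quotient of $f_k$ from above. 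In short: you correctly located the difficulty in the singular term, but the workable resolution is one-sided inequalities that bypass it, not a direct proof of its finiteness.
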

\begin{proof}
From \eqref{f1} and \eqref{f2} we have that
{\small\begin{align*}
0=&[f_{k}^p(w) -1]\|w_k+ w\phi\|^p + \|w_k+w\phi\|^p-\|w_k\|^p\\
&\quad -[f_{k}^{1-q}(w) -1]\io a(x)(w_k+w\phi)_{+}^{1-q}dx - \io a(x)[((w_k+w\phi)_{+}^{1-q}- (w_k)_{+}^{1-q})]dx\\
&\quad -\la [f_{k}^{r+1}(w) -1]\io b(x)(w_k+w\phi)_{+}^{r+1}dx - \la \io b(x)[((w_k+w\phi)_{+}^{r+1}- (w_k)_{+}^{r+1})]dx\\
\leq & [f_{k}^p(\rho v) -1]\|w_k+ \rho v\phi\|^p + \|w_k+\rho v\phi\|^p-\|w_k\|^p- [f_{k}^{1-q}(\rho v) -1]\io a(x)(w_k+\rho v\phi)_{+}^{1-q}dx \\
&\quad -\la [f_{k}^{r+1}(\rho v) -1]\io b(x)(w_k+\rho v\phi)_{+}^{r+1}dx - \la \io b(x)[((w_k+\rho v\phi)_{+}^{r+1}- (w_k)_{+}^{r+1})]dx,
\end{align*}}
since
\begin{align}\label{r1}
 (w_k+\rho v\phi)_{+}^{1-q}(x)- (w_k)_{+}^{1-q}(x) = &\left\{\begin{array}{lr}
 (w_k+\rho v\phi)^{1-q}(x)- (w_k)^{1-q}(x)\;\mbox{if}\; w_k\geq 0\\
 0\;\mbox{if}\; w_k\leq 0, w_k+ \rho v\phi\leq 0 \\
 (w_k+\rho v\phi)^{1-q}(x)\;\mbox{if}\; w_k\leq 0, w_k+\rho v\phi\geq 0,
\end{array}\right.
\end{align}
we have $\io a(x)[((w_k+w\phi)_{+}^{1-q}- (w_k)_{+}^{1-q})(x)]dx\geq 0$.

\noi Now dividing by $\rho>0$ and passing to the limit $\rho\ra 0$, we derive that
{\small \begin{align}\label{s6}
0\leq& p \langle f_{k}^{\prime}(0),v \rangle \|w_k\|^p + p \int_{Q}\frac{w_{k}(x,y)((v\phi)(x)-(v\phi)(y))}{|x-y|^{n+ps}} dxdy  -(1-q)\langle f_{k}^{\prime}(0),v \rangle\io a(x) (w_{k})_{+}^{1-q}dx\notag\\
 &\quad - \la(r+1) \left(\langle f_{k}^{\prime}(0),v \rangle\io b(x) (w_{k})_{+}^{r+1}dx + \io b(x) (w_{k})_{+}^r v\phi dx\right)\notag\\
=& \langle f_{k}^{\prime}(0),v \rangle\left[ p \|w_k\|^p - (1-q) \io a(x) (w_{k})_{+}^{1-q}(x)dx -\la(r+1)\io b(x) (w_{k})_{+}^{r+1}(x)dx\right]\notag\\
 &\quad+ p \int_{Q}\frac{w_{k}(x,y)((v\phi)(x)-(v\phi)(y))}{|x-y|^{n+ps}} dxdy -\la(r+1)   \io b(x) (w_{k})_{+}^r v\phi dx\notag\\
=& \langle f_{k}^{\prime}(0),v \rangle\left[ (p-1+q)\|w_k\|^p -\la(r+q)\io b(x) (w_{k})_{+}^{r+1}(x)dx\right] -\la(r+1)   \io b(x) (w_{k})_{+}^r v\phi dx\notag\\
 &\quad+ p \int_{Q}\frac{w_{k}(x,y)((v\phi)(x)-(v\phi)(y))}{|x-y|^{n+ps}} dxdy.
\end{align}}
From \eqref{s5} and \eqref{s6} we know immediately that $\langle f_{k}^{\prime}(0),v \rangle\ne -\infty$. Now we show that $\langle f_{k}^{\prime}(0),v \rangle\ne +\infty$. Arguing by contradiction, we assume that $\langle f_{k}^{\prime}(0),v \rangle=+\infty$. Since
\begin{align}\label{s7}
|f_k(\rho v)-1|\|w_k\| +  f_{k}(\rho v) \|\rho v \phi\|& \geq \|[f_k(\rho v)-1]w_k + \rho v f_{k}(\rho v)\phi\|\notag\\
&= \|f_k(\rho v) (w_k +\rho v\phi)- w_k\|
\end{align}
and
\[f_{k}(\rho v)> f_{k}(0) =1\]
for sufficiently large $k$. From the definition of derivative $\langle f_{k}^{\prime}(0),v \rangle$, applying equation \eqref{c2} with $w= f_{k}(\rho v)(w_k+\rho v\phi)\in \mc N_{\la}^{+}$, we
clearly have that
{\small\begin{align*}
&[f_k(\rho v)-1]\frac{\|w_k\|}{k} +  f_{k}(\rho v)\frac{\|\rho v \phi\|}{k}\\
& \geq \frac{1}{k}\|f_k(\rho v)(w_k +\rho v \phi)-w_k\|\\
&\geq J_{\la}(w_k) - J_{\la}(f_{k}(\rho v)(w_k+ \rho v\phi))\\
&= \left(\frac{1}{p}-\frac{1}{1-q}\right) \|w_k\|^p +\la \left(\frac{1}{1-q}-\frac{1}{r+1}\right)\io b(x) (w_{k})_{+}^{r+1}dx\\
&\quad + \left(\frac{1}{1-q}-\frac{1}{p}\right) f_{k}^p(\rho v)\|w_k+\rho v\phi\|^p -\la \left(\frac{1}{1-q}-\frac{1}{r+1}\right) f_{k}^{r+1}(\rho v)\io b(x)(w_k+\rho v\phi)_{+}^{r+1} dx\\
&= \left(\frac{1}{1-q}-\frac{1}{p}\right)(\|w_k+\rho v\phi\|^p-\|w_k\|^p) + \left(\frac{1}{1-q}-\frac{1}{p}\right)[f_{k}^p(\rho v)-1]\|w_k+\rho v\phi\|^p\\
&\quad -\la \left(\frac{1}{1-q}-\frac{1}{r+1}\right) f_{k}^{r+1}(\rho v) \io b(x) [((w_k +\rho v\phi)_{+}^{r+1} - (w_{k})_{+}^{r+1})(x)] dx\\
&\quad -\la \left(\frac{1}{1-q}-\frac{1}{r+1}\right) [f_{k}^{r+1}(\rho v)-1] \io b(x)  (w_{k})_{+}^{r+1} dx.
\end{align*}}
Dividing by $\rho>0$ and passing to the limit as $\rho\ra 0$, we can obtain that
{\small\begin{align*}
 &\langle f_{k}^{\prime}(0),v \rangle\frac{\|w_k\|}{k} + \frac{\|v\phi\|}{k}
\geq
 \quad  \left(\frac{p-1+q}{1-q}\right)\langle f_{k}^{\prime}(0),v \rangle\|w_k\|^p-\la\left(\frac{r+q}{1-q}\right)\io b(x)  (w_{k})_{+}^{r} v\phi  dx\\
&+ \left(\frac{p-1+q}{1-q}\right)\int_{Q}\frac{w_{k}(x,y)((v\phi)(x)-(v\phi)(y))}{|x-y|^{n+ps}} dxdy-\la \left(\frac{r+q}{1-q}\right) \langle f_{k}^{\prime}(0),v \rangle \io b(x)  (w_{k})_{+}^{r+1} dx  \\
=& \frac{\langle f_{k}^{\prime}(0),v \rangle}{1-q}\left[ (p-1+q)\|w_k\|^p -\la (r+q) \io \bk \right]\\
&\quad +\left(\frac{p-1+q}{1-q}\right)\int_{Q}\frac{w_{k}(x,y)((v\phi)(x)-(v\phi)(y))}{|x-y|^{n+ps}} dxdy -\la \left(\frac{r+q}{1-q}\right) \io b(x)  (w_{k})_{+}^{r} v\phi dx
\end{align*}}
that is,
{\small\begin{align}\label{s8}
\frac{\|v\phi\|}{k}&\geq  \frac{\langle f_{k}^{\prime}(0),v \rangle}{1-q}\left[ (p-1+q)\|w_k\|^p -\la (r+q) \io \bk - \frac{(1-q)\|w_k\|}{k} \right]\notag\\
& +\left(\frac{p-1+q}{1-q}\right)\int_{Q}\frac{w_{k}(x,y)((v\phi)(x)-(v\phi)(y))}{|x-y|^{n+ps}} dxdy -\la \left(\frac{r+q}{1-q}\right) \io b(x)  (w_{k})_{+}^{r}v\phi dx
\end{align}}
which is impossible because $\langle f_{k}^{\prime}(0),v \rangle=+\infty$ and
\[(p-1+q)\|w_k\|^p -\la(r+q)\io \bk -\frac{(1-q)\|w_k\|}{k} \geq C_2 - \frac{(1-q)C_1}{k}>0.\]
 In conclusion, $|\langle f_{k}^{\prime}(0),v \rangle|<+\infty$. Furthermore \eqref{s5} with $\|w_k\|\leq C_1$ and two inequalities \eqref{s6} and \eqref{s8} also imply that
\[|\langle f_{k}^{\prime}(0),v \rangle|\leq C_3\]
for $k$ sufficiently large and a suitable constant $C_3$.\QED
\end{proof}
\begin{Lemma}\label{le8}
For each $0\leq \phi\in C_{X_{0}}$ and for every $0\leq v\in\h$ with $\|v\|\leq 1$, we have $a(x) w_{+}^{-q} v\phi \in L^{1}(\Om)$ and
{\small \begin{align}\label{s9}
\int_{Q}\frac{w(x,y)((v\phi)(x)-(v\phi)(y))}{|x-y|^{n+ps}} dxdy- \io a(x) w_{+}^{-q}v\phi dx -\la \io b(x) w_{+}^{r}v\phi dx\geq 0,
\end{align}}
where $w(x,y)= |w(x)-w(y)|^{p-2}(w(x)-w(y))$.
\end{Lemma}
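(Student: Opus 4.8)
The plan is to pass to the weak limit in the variational inequality that governs the Ekeland sequence $\{w_k\}\subset\mc N_{\la}^{+}$, using the finiteness of $\langle f_k^{\prime}(0),v\rangle$ established in Lemma \ref{le7}. Recall that applying \eqref{c2} with the test element $f_k(\rho v)(w_k+\rho v\phi)\in\mc N_\la^{+}$, dividing by $\rho>0$ and letting $\rho\to0^{+}$, produces (as in the derivation leading to \eqref{s6}) the inequality
\begin{align*}
\frac{1}{k}\Big(|\langle f_k^{\prime}(0),v\rangle|\,\|w_k\| + \|v\phi\|\Big)
\geq &\;\langle f_k^{\prime}(0),v\rangle\Big[(p-1+q)\|w_k\|^p -\la(r+q)\io\bk\Big]\\
&+ p\int_{Q}\frac{w_k(x,y)((v\phi)(x)-(v\phi)(y))}{|x-y|^{n+ps}}\,dxdy -\la(r+1)\io b(x)(w_k)_{+}^{r}v\phi\,dx,
\end{align*}
after dividing through by $p$ and regrouping. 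First I would isolate the gradient term: since the bracket multiplying $\langle f_k^{\prime}(0),v\rangle$ is bounded below by $C_2>0$ from \eqref{s5}, and $|\langle f_k^{\prime}(0),v\rangle|\leq C_3$ and $\|w_k\|\leq C_1$ by Lemma \ref{le7}, the left side tends to $0$ as $k\to\infty$. This forces
\[
\liminf_{k\to\infty}\left[\int_{Q}\frac{w_k(x,y)((v\phi)(x)-(v\phi)(y))}{|x-y|^{n+ps}}\,dxdy -\la\io b(x)(w_k)_{+}^{r}v\phi\,dx\right]\geq 0
\]
up to the harmless factors, which is the non-singular shadow of \eqref{s9}.

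Next I would identify the limits of the two explicit terms. The nonlocal gradient term converges to $\int_{Q}\frac{w(x,y)((v\phi)(x)-(v\phi)(y))}{|x-y|^{n+ps}}\,dxdy$: writing $w_k(x,y)=|w_k(x)-w_k(y)|^{p-2}(w_k(x)-w_k(y))$, these are uniformly bounded in $L^{p/(p-1)}(Q,|x-y|^{-(n+ps)})$ by boundedness of $\|w_k\|$, and the weak convergence $w_k\rightharpoonup w$ together with a.e.\ convergence yields $w_k(x,y)\rightharpoonup w(x,y)$ weakly in that dual space, while $\frac{(v\phi)(x)-(v\phi)(y)}{|x-y|^{(n+ps)/p}}\in L^p$ since $v\phi\in X_0$. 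For the subcritical term $\io b(x)(w_k)_+^r v\phi\,dx$, the compact embedding $w_k\to w$ strongly in $L^t(\Om)$ for $t<p_s^*$, the Hölder exponent condition on $b$, and the fact that $r+1<p_s^*$ give convergence to $\io b(x)w_+^r v\phi\,dx$ by a standard dominated-convergence/Vitali argument. The remaining task is the singular term $\io a(x)w_+^{-q}v\phi\,dx$, which does not appear explicitly above but must be recovered.

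The main obstacle, and the real content of the lemma, is the singular integral. The trick is that the term $\io a(x)(w_k)_+^{-q}v\phi\,dx$ was absorbed into $\langle f_k^{\prime}(0),v\rangle$ during the limiting computation; to extract it I would instead retain the inequality in the form appearing just after \eqref{r1}, where the quantity $\io a(x)[(w_k+\rho v\phi)_+^{1-q}-(w_k)_+^{1-q}]\,dx\geq 0$ enters with a definite sign. Dividing by $\rho$ and applying Fatou's lemma to this nonnegative difference quotient — using that $\frac{(w_k+\rho v\phi)_+^{1-q}-(w_k)_+^{1-q}}{\rho}\to(1-q)(w_k)_+^{-q}v\phi$ pointwise where $w_k>0$ — first shows $a(x)(w_k)_+^{-q}v\phi\in L^1$ with a uniform bound, and then, combined with $w_k\to w$ a.e., a second application of Fatou gives $a(x)w_+^{-q}v\phi\in L^1(\Om)$ and the lower-semicontinuity estimate $\io a(x)w_+^{-q}v\phi\,dx\leq\liminf_k\io a(x)(w_k)_+^{-q}v\phi\,dx$. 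Feeding these three limit identifications back into the asymptotic inequality, the $\langle f_k^{\prime}(0),v\rangle$-coefficient bracket cancels against the $a$-term exactly as it did in the finite-$k$ computation, and what survives is precisely \eqref{s9}. The delicate point throughout is that one only controls $(w_k)_+^{-q}$ on the set $\{w_k>0\}$ and must handle the singularity at $\{w=0\}$ purely through the one-sided Fatou bound rather than any pointwise domination.
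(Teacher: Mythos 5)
Your core mechanism is indeed the paper's: the Ekeland sequence, the implicit-function maps $f_k$, a one-sided Fatou argument (first in $\rho$, then in $k$) applied to the nonnegative difference quotient $\frac{1}{\rho}\io a(x)[((w_k+\rho v\phi)_+^{1-q}-(w_k)_+^{1-q})(x)]dx$, and identification of the limits of the nonsingular terms (where you are in fact more careful than the paper, which passes to the limit in the gradient and $b$-terms without comment). However, two of your steps fail as written. First, the ``non-singular shadow'' deduction is wrong: in your first display the term $\langle f_k'(0),v\rangle\bigl[(p-1+q)\|w_k\|^p-\la(r+q)\io\bk\bigr]$ is bounded (by $C_3$ times a bounded bracket) but it neither vanishes as $k\ra\infty$ nor has a definite sign --- \eqref{s5} bounds the bracket from below, not the product, since $\langle f_k'(0),v\rangle$ may be negative --- so from the left side tending to $0$ you obtain only an \emph{upper} bound on the gradient-minus-$b$ expression, not the lower bound $\geq 0$ that you claim.

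Second, and more seriously, the final assembly has a gap. The inequality into which you propose to ``feed back'' the Fatou bounds is the reduced Ekeland inequality (the analogue of \eqref{s8}), in which the $a$-terms were already eliminated through the Nehari constraint before differentiating; the singular term simply does not occur there, so no bracket/$a$-term cancellation is available in it. Conversely, if you retain the singular quotient in ``the form just after \eqref{r1}'', that form comes from the Nehari identities \eqref{f1}--\eqref{f2} (coefficients $1,1,\la$), and there the coefficient of $\langle f_k'(0),v\rangle$ reduces via the constraint to $(p-1+q)\|w_k\|^p-\la(r+q)\io\bk\geq C_2>0$, which does \emph{not} cancel: you are left with a bounded, sign-indefinite, non-vanishing term, which destroys \eqref{s9} in the limit. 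The paper avoids this by applying \eqref{c2} to the \emph{full} difference $J_\la(w_k)-J_\la(f_k(\rho v)(w_k+\rho v\phi))$, with the coefficients $\tfrac1p,\tfrac1{1-q},\tfrac{\la}{r+1}$ kept intact: differentiating along the ray then makes the coefficient of $\langle f_k'(0),v\rangle$ exactly $-\bigl[\|w_k\|^p-\io\ak-\la\io\bk\bigr]=0$, the vanishing Nehari combination, while the nonnegative singular quotient survives with its sign; the Ekeland penalty $\tfrac1k(\cdot)\ra 0$ and your two Fatou steps then yield \eqref{s9}. (One can also repair your route by adding \eqref{s8} to your retained-singular-term inequality so that the brackets cancel between the two inequalities --- but that combination is precisely the missing step, and it is not ``exactly as in the finite-$k$ computation,'' where the bracket reduces to the positive $\mc N_{\la}^{+}$ quantity rather than to zero.)
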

\begin{proof}
Applying \eqref{s7} and $\eqref{c2}$ again, we have that
\begin{align*}
&[f_k(\rho v)-1]\frac{\|w_k\|}{k} +  f_{k}(\rho v)\frac{\| \rho v\phi\|}{k}\\
& \geq \frac{1}{k}\|f_k(\rho v)(w_k + \rho v \phi)-w_k\|\\
&\geq J_{\la}(w_k) - J_{\la}(f_k(\rho v)(w_k+ \rho v\phi))\\
&= \frac{1}{p}\|w_k\|^p -\frac{1}{1-q}\io a(x) (w_k)_{+}^{1-q}dx -  \frac{\la}{r+1}\io  b(x) (w_k)_+^{r+1}dx-\frac{1}{p} \|f_{k}(\rho v)(w_k+\rho v\phi)\|^p dx\\
&\quad  +\frac{1}{1-q}\io a(x)(f_{k}(\rho v)(w_k+\rho v\phi))_{+}^{1-q}(x) dx +\frac{\la}{r+1} \io b(x)(f_{k}(\rho v)(w_k+\rho v\phi))_{+}^{r+1}(x) \\
&=- \frac{f^{p}_{k}(\rho v)-1}{p} \|w_k\|^p - \frac{f_{k}^p(\rho v)}{p}(\|w_k+\rho v\phi\|^p- \|w_k\|^p)\\
&\quad +\frac{f_{k}^{1-q}(\rho v)-1}{1-q} \io a(x) (w_k+\rho v\phi)_{+}^{1-q}(x) +\frac{1}{1-q} \io a(x) [((w_k+\rho v\phi)_{+}^{1-q} - (w_{k})_{+}^{1-q})(x)]\\
&\quad +\la \frac{f_{k}^{r+1}(\rho v)-1}{r+1} \io b(x) (w_k+\rho v\phi)_{+}^{r+1}(x) +\frac{\la}{r+1} \io b(x) [((w_k+\rho v\phi)_{+}^{r+1} - (w_{k})_{+}^{r+1})(x)].
\end{align*}

\noi Dividing by $\rho>0$ and passing to the limit $\rho\ra 0^+$, we obtain
\begin{align*}
&|\langle f^{\prime}_k(0),v \rangle|\frac{\|w_k\|}{k} + \frac{\|v\phi\|}{k}\\
\geq &- \langle f^{\prime}_{k}(0), v\rangle \|w_k\|^p -\int_{Q}\frac{w_{k}(x,y)(\phi(x)-\phi(y))}{|x-y|^{n+ps}}dxdy\\
 &\; + \langle f_{k}^{\prime}(0),v \rangle \io a(x) (w_{k})_{+}^{1-q}(x)dx +\liminf_{\rho\ra 0^{+}}\frac{1}{1-q} \io\frac{ a(x) [((w_k+\rho v\phi)_{+}^{1-q} - (w_{k})_{+}^{1-q})(x)]}{\rho}dx\\
&\; +\la \langle f_{k}^{\prime}(0),v \rangle \io b(x)(w_{k})_{+}^{r+1} dx + \la \io b(x) (w_{k})_{+}^{r} v\phi dx.\\
=&\;- \langle f^{\prime}_{k}(0), v\rangle\left[\|w_k\|^p -\io\ak -\la\io\bk\right]\\
&-\int_{Q}\frac{w_{k}(x,y)(\phi(x)-\phi(y))}{|x-y|^{n+ps}}dxdy + \la \io b(x) (w_{k})_{+}^{r} v\phi dx \\
&\;+\liminf_{\rho\ra 0^{+}}\frac{1}{1-q} \io\frac{ a(x) [(w_k+\rho v\phi)_{+}^{1-q} - (w_{k})_{+}^{1-q})(x)]}{\rho}dx\\
=&-\int_{Q}\frac{w_{k}(x,y)(\phi(x)-\phi(y))}{|x-y|^{n+ps}}dxdy+ \la \io b(x) (w_{k})_{+}^{r} v\phi dx\\
&\;+\liminf_{\rho\ra 0^{+}}\frac{1}{1-q} \io\frac{ a(x) [((w_k+\rho v\phi)_{+}^{1-q} - (w_{k})_{+}^{1-q})(x)]}{\rho}dx.
\end{align*}
Then by above inequality, one can see that
\[\liminf_{\rho\ra 0^{+}} \io\frac{ a(x) [((w_k+\rho v\phi)_{+}^{1-q} - (w_{k})_{+}^{1-q})(x)]}{\rho}dx\]
is finite. Now, using \eqref{r1}, we have $a(x) [((w_k+ \rho v\phi)_{+}^{1-q} - (w_{k})_{+}^{1-q})(x)]\geq 0$ for all  $x\in \Om,$ for all $t>0$, then by the Fatou Lemma, we have that
\begin{align*}
\io &a(x) (w_{k})_{+}^{-q}v\phi  dx
 \leq \liminf_{\rho\ra 0^{+}}\frac{1}{1-q} \io\frac{ a(x) [((w_k+\rho v\phi)_{+}^{1-q} - (w_{k})_{+}^{1-q})(x)]}{\rho}dx\\
&\leq \frac{|\langle f^{\prime}_k(0),v \rangle|\|w_k\|+ \|v\phi\|}{k}-  \la \io b(x) (w_{k})_{+}^{r} v\phi dx+\int_{Q}\frac{w_{k}(x,y)(\phi(x)-\phi(y))}{|x-y|^{n+ps}}dxdy\\
&\leq \frac{C_1C_3 \|v\|+ \|v\phi\|}{k}-  \la \io b(x) (w_{k})_{+}^{r} v\phi dx+\int_{Q}\frac{w_{k}(x,y)(v\phi(x)-v\phi(y))}{|x-y|^{n+ps}}dxdy
\end{align*}
\noi Again using the Fatou Lemma and the above relation we have
{\begin{align*}
\io a(x)& w_{+}^{-q}v\phi dx \leq \io \left[\liminf_{k\ra\infty} a(x) (w_{k})_{+}^{-q} v\phi \right] dx\leq  \liminf_{k\ra\infty}\io a(x) (w_{k})_{+}^{-q}v\phi dx \\
&= \int_{Q}\frac{|w(x)-w(y)|^{p-2} (w(x)-w(y))(v\phi(x)-v\phi(y))}{|x-y|^{n+ps}}dxdy-  \la \io b(x) w_{+}^{r} v\phi dx,
\end{align*}}
which completes the proof of Lemma.\QED
\end{proof}
\begin{Corollary}\label{cc1}
For every $0\leq \phi\in \h$, we have $a(x) w_{+}^{-q} \phi \in L^{1}(\Om)$, $w_{+}>0$ in ${\Om}$ and
{\small \begin{align}\label{s91}
\int_{Q}\frac{|w(x)-w(y)|^{p-2}(w(x)-w(y))(\phi(x)-\phi(y))}{|x-y|^{n+ps}} dxdy- \io a w_{+}^{-q} \phi dx -\la \io b w_{+}^{r}\phi dx\geq 0.
\end{align}}
\end{Corollary}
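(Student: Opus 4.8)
The plan is to deduce \eqref{s91} from the inequality \eqref{s9} of Lemma \ref{le8} by an approximation argument, using the density of $C_{X_0}$ in $\h$ together with Fatou's Lemma to handle the singular term. Throughout set
\[\Phi[\psi]:=\int_{Q}\frac{|w(x)-w(y)|^{p-2}(w(x)-w(y))(\psi(x)-\psi(y))}{|x-y|^{n+ps}}\,dxdy,\qquad \psi\in\h.\]
First I would observe that each of the three terms in \eqref{s9} is positively homogeneous of degree one in the test function, so the normalization $\|v\|\le1$ in Lemma \ref{le8} costs nothing: by scaling, \eqref{s9} holds for the product $v\phi$ for every $0\le v\in\h$ and every $0\le\phi\in C_{X_0}$. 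Given $0\le\eta\in C_{X_0}$ with $\mathrm{supp}\,\eta\Subset\Om$, choosing a cutoff $v\in C_{X_0}$ with $v\equiv1$ on $\mathrm{supp}\,\eta$ yields $v\eta=\eta$, so (taking $\phi=\eta$) \eqref{s9} holds for such $\eta$; since these are dense in $C_{X_0}$ and the relevant functionals are continuous, it holds for every $0\le\eta\in C_{X_0}$. This reduces the Corollary to extending
\[\Phi[\eta]-\io a\,w_{+}^{-q}\eta\,dx-\la\io b\,w_{+}^{r}\eta\,dx\ \ge\ 0\]
from $0\le\eta\in C_{X_0}$ to all $0\le\phi\in\h$.

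Next I would fix $0\le\phi\in\h$ and, using density of $C_{X_0}$ in $\h$, pick $\phi_k\in C_{X_0}$ with $\phi_k\ge0$ and $\phi_k\ra\phi$ in $\h$, hence (along a subsequence) a.e. in $\Om$; the nonnegativity can be arranged because nonnegative smooth functions are dense among the nonnegative elements of $\h$. The two regular terms then converge. Indeed $\Phi$ is a bounded linear functional on $\h$, since $\tfrac{|w(x)-w(y)|^{p-1}}{|x-y|^{(n+ps)/p'}}\in L^{p'}(Q)$ by $w\in\h$ and H\"older's inequality; and $\la\io b\,w_{+}^{r}\cdot\,dx$ is bounded because $b\,w_{+}^{r}\in L^{(p_{s}^{*})'}(\Om)$ by the H\"older computation behind \eqref{e3} together with the embedding $\h\hookrightarrow L^{p_{s}^{*}}(\Om)$. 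Consequently $\Phi[\phi_k]\ra\Phi[\phi]$ and $\la\io b\,w_{+}^{r}\phi_k\,dx\ra\la\io b\,w_{+}^{r}\phi\,dx$, so the right-hand side of the $C_{X_0}$-inequality converges to a finite limit.

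For the singular term I would invoke Fatou's Lemma: since $a>0$ and $\phi_k\ge0$, the integrands $a\,w_{+}^{-q}\phi_k\ge0$ converge a.e. to $a\,w_{+}^{-q}\phi$, whence $\io a\,w_{+}^{-q}\phi\,dx\le\liminf_{k}\io a\,w_{+}^{-q}\phi_k\,dx$. Applying the $C_{X_0}$-inequality to each $\phi_k$ gives $\io a\,w_{+}^{-q}\phi_k\,dx\le\Phi[\phi_k]-\la\io b\,w_{+}^{r}\phi_k\,dx$, a sequence with finite limit; therefore $\io a\,w_{+}^{-q}\phi\,dx<\infty$, i.e. $a\,w_{+}^{-q}\phi\in L^{1}(\Om)$, and combining the Fatou bound with the convergence of the regular terms yields \eqref{s91}. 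Finally $w_{+}>0$ a.e. in $\Om$ follows from this $L^1$-membership: were $w_{+}=0$ on a set $E\subset\Om$ of positive measure, then picking $0\le\phi\in\h$ with $\phi>0$ a.e. on $\Om$ would force $a\,w_{+}^{-q}\phi=+\infty$ on $E$ (as $a>0$), contradicting integrability, so $|E|=0$.

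The main obstacle is the passage to the limit in $\io a\,w_{+}^{-q}\phi_k\,dx$, where no uniform integrability is available a priori because of the singularity of $w_{+}^{-q}$; the argument turns entirely on the sign of the integrand, so that Fatou's Lemma applies, and it is precisely this one-sided estimate that simultaneously delivers the $L^{1}$-membership of $a\,w_{+}^{-q}\phi$ and the correct direction of the inequality \eqref{s91}. Some care is also needed to produce a nonnegative approximating sequence $\phi_k\ra\phi$ in $\h$ converging a.e. in $\Om$.
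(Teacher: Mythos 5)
Your proof is correct and follows essentially the same route as the paper: both reduce \eqref{s9} to nonnegative test functions in $C_{X_0}$ by choosing $v$ constant (and positive) on the support of $\phi$, exploiting the degree-one homogeneity in the test function, and then extend the inequality to all $0\leq\phi\in X_0$ by density. The only difference is that the paper compresses the final extension into the phrase ``by density argument,'' whereas you carry it out explicitly --- continuity of the two nonsingular terms plus Fatou's Lemma for the singular term, which simultaneously yields $a\,w_{+}^{-q}\phi\in L^{1}(\Om)$ and the correct direction of the inequality --- so your write-up is a faithful filling-in of the paper's proof rather than a different method.
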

\begin{proof}
Choosing $v\in\h$ such that $v\geq 0$, $v\equiv l$ in the neighborhood of support of $\phi$ and $\|v\|\leq 1$, for some $l>0$ is a constant. Then we note that $\io a(x) w_{+}^{-q}\phi dx<\infty$, for every $0\leq \phi\in C_{X_0}$ which guarantees that $w_{+}>0$  a.e in $\Om$. Putting this choice of $v$ in \eqref{s9}, we have for every $0\leq \phi\in C_{X_0}$
{\small
\[\int_{Q}\frac{|w(x)-w(y)|^{p-2}(w(x)-w(y))(\phi(x)-\phi(y))}{|x-y|^{n+ps}} dxdy- \io a w_{+}^{-q} \phi dx -\la \io b w_{+}^{r}\phi dx\geq 0.\]}
Hence by density argument, \eqref{s91} holds for every $0\leq \phi\in \h$, which completes the proof of the Corollary.
\end{proof}

\begin{Lemma}\label{le9}
We show that $w>0$ and $w\in \mc N_{\la}^{+}$.
\end{Lemma}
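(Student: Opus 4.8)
The plan is to establish two claims: positivity of $w$ on $\Om$ and membership $w\in\mc N_\la^+$. The weak limit $w$ is already known to be nonzero and nonnegative (from the earlier lemma giving $J_\la(w)<0$, so $w\not\equiv 0$), and from Corollary \ref{cc1} we have that $a(x)w_+^{-q}\phi\in L^1(\Om)$ for every $0\le\phi\in X_0$, which forces $w_+>0$ a.e.\ in $\Om$; together with $w\ge 0$ a.e.\ this gives $w>0$ in $\Om$. So the first claim is essentially handed to us by Corollary \ref{cc1}, and the real work is showing $w$ lies in $\mc N_\la^+$ rather than merely satisfying the variational inequality \eqref{s91}.

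\medskip

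\noi For the membership claim, the strategy is to upgrade the inequality \eqref{s91} to an equality, i.e.\ to show $w$ is a genuine weak solution of $(P_\la^+)$, and then verify the sign condition defining $\mc N_\la^+$. First I would exploit that \eqref{s91} holds for \emph{every} $0\le\phi\in X_0$; since $w>0$ a.e., I can test with $\phi=w$ itself (justifying that $a(x)w_+^{1-q}\in L^1$ via the H\"older bound \eqref{e2}) to obtain
\[
\|w\|^p-\io a(x)w_+^{1-q}dx-\la\io b(x)w_+^{r+1}dx\ge 0.
\]
On the other hand, weak lower semicontinuity of the norm together with the strong $L^{r+1}$-convergence of $w_k$ and Fatou applied to the singular term (as in the proof of Lemma \ref{le6}) gives the reverse inequality $\le 0$ from the constraint $w_k\in\mc N_\la$. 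Combining the two forces equality, so $w\in\mc N_\la$, and in fact $w$ solves $(P_\la^+)$.

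\medskip

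\noi To conclude $w\in\mc N_\la^+$ rather than $\mc N_\la^0$, I would invoke Lemma \ref{le6}: inequality \eqref{s2} states precisely that
\[
(p-1+q)\io a(x)w_+^{1-q}dx-\la(r-p+1)\io b(x)w_+^{r+1}dx>0,
\]
which, after substituting $\io a(x)w_+^{1-q}dx=\|w\|^p-\la\io b(x)w_+^{r+1}dx$ (now that $w\in\mc N_\la$), rearranges to $(p-1+q)\|w\|^p>\la(r+q)\io b(x)w_+^{r+1}dx$, i.e.\ $\phi_w''(1)>0$. Hence $w\in\mc N_\la^+$. Since $\mc N_\la^0=\{0\}$ for $\la\in(0,\La)$ and $w\ne 0$, this also rules out the degenerate case cleanly.

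\medskip

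\noi The main obstacle I anticipate is the passage to equality in the Nehari constraint: the singular term $a(x)w_+^{-q}$ is only controlled through one-sided Fatou estimates, so obtaining the $\ge 0$ direction requires care in testing \eqref{s91} with $\phi=w$ (the test function must be admissible and the resulting integral finite), while the $\le 0$ direction needs the strong convergence in $L^{r+1}(\Om)$ to handle the $b$-term and lower semicontinuity for the gradient term. Reconciling these with the strict inequality \eqref{s2} from Lemma \ref{le6} is what pins $w$ into $\mc N_\la^+$ and simultaneously shows $w$ achieves the infimum $\inf_{\mc N_\la^+}J_\la$, yielding the desired solution.
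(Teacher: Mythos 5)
Your proposal is correct in outline and follows essentially the same route as the paper: positivity via Corollary \ref{cc1}, testing \eqref{s91} with $\phi=w$ to get $\|w\|^p\geq \io\aw+\la\io\bw$, weak lower semicontinuity plus the constraint $w_k\in\mc N_\la$ for the reverse inequality, and finally the strict inequality \eqref{s2} of Lemma \ref{le6} combined with the Nehari equality to conclude $(p-1+q)\|w\|^p>\la(r+q)\io\bw$, i.e.\ $w\in\mc N_\la^{+}$.

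Two inaccuracies deserve mention, neither fatal. First, your assertion that $w$ is ``already known to be nonnegative'' is false: the earlier lemma yields only $w\not\equiv 0$, and no result before this point gives a sign for the weak limit. The paper supplies this step itself, by inserting $\phi=w^{-}$ into \eqref{s91} and using the nonlocal structure of the operator to conclude $\|w^{-}\|=0$. Your argument nevertheless survives without that premise, since $w_{+}(x)>0$ holds if and only if $w(x)>0$, so the conclusion $w_{+}>0$ a.e.\ of Corollary \ref{cc1} is by itself equivalent to $w>0$ a.e.\ in $\Om$; you should simply drop the unjustified claim rather than lean on it. Second, Fatou's lemma gives $\liminf_{k}\io\ak\geq\io\aw$, which is the wrong direction for what you need, namely $\limsup_{k}\|w_k\|^p\leq\io\aw+\la\io\bw$; the correct tool is a.e.\ convergence plus uniform integrability (Vitali's theorem), which follows from the H\"older bounds \eqref{e2} and \eqref{e3} and the integrability assumptions on $a$ and $b$. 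The paper glosses over this same point, so it does not affect the comparison of approaches.
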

\begin{proof}
Using \eqref{s91} with $\phi=w^-$, we obtain that
\begin{align*}
0&\leq \int_{Q}\frac{|w(x)-w(y)|^{p-2}(w(x)-w(y))(w^-(x)- w^-(y))}{|x-y|^{n+ps}} dxdy \\
&\leq - \|w^-\|^2 - 2\int_{Q}\frac{|w(x)-w(y)|^{p-2}w^-(x) w^+(y)}{|x-y|^{n+ps}} dxdy
\leq - \|w^-\|^2 \leq 0.
\end{align*}
i.e, $w^-=0$ a.e. So, $w=w^+ >0$ a.e by Corollary \ref{cc1}. Hence $w>0$ in $\Om$.
Now using \eqref{s91} with $\phi=w$, we obtain that
\begin{align*}
\|w\|^p\geq  \io a(x) w_{+}^{1-q}(x)dx + \la \io b(x) w_{+}^{r+1}(x) dx.
\end{align*}
On the other hand, by the weak lower semi-continuity of the norm, we have that
\begin{align*}
\|w\|^p&\leq \liminf_{k\ra\infty}\|w_k\|^p\leq \limsup_{k\ra\infty}\|w_k\|^p\\
&= \io a(x) w_{+}^{1-q}(x)dx +\la \io b(x) w_{+}^{r+1}(x) dx.
\end{align*}
Thus
\begin{align}\label{s10}
\|w\|^p
= \io a(x) w_{+}^{1-q}(x)dx + \la \io b(x) w_{+}^{r+1}(x) dx.
\end{align}
Consequently, $w_k\ra w$ in $\h$ and $w\in \mc N_{\la}$. Now from \eqref{s2} it follows that
\begin{align*}
(p-1+q)\|w\|^p& -\la(r+q)\io b(x) w_{+}^{r+1}(x) dx\\
=& (p-1+q)\io a(x) w_{+}^{1-q}(x) dx - \la(r-p+1) \io b(x) w_{+}^{r+1}(x) dx>0,
\end{align*}
that is, $w\in \mc N_{\la}^{+}$. \QED
\end{proof}
\begin{Lemma}\label{le10}
Show that $w$ is in fact a positive weak solution of problem $(P_{\la})$.
\end{Lemma}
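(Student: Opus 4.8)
The plan is to upgrade the variational inequality \eqref{s91} from Corollary \ref{cc1} into an equality, thereby showing that $w$ is a weak solution. From Lemma \ref{le9} we already know $w>0$ in $\Om$, $w\in\mc N_\la^+$, and that $a(x)w_+^{-q}\phi\in L^1(\Om)$ for every $0\le\phi\in\h$. The key observation is that \eqref{s91} holds for all \emph{nonnegative} test functions, and the standard trick to promote such a one-sided inequality to an equality is to test with $\phi=w+\varepsilon\xi$ for an \emph{arbitrary} $\xi\in C_{X_0}$ (not necessarily of one sign) and small $\varepsilon>0$, using that $w+\varepsilon\xi$ is admissible as a nonnegative-type test direction near the positive function $w$.

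First I would fix an arbitrary $\xi\in C_{X_0}$ and, for $\varepsilon>0$ small, insert $\phi=(w+\varepsilon\xi)^+$ into \eqref{s91}. Because $w>0$ a.e.\ in $\Om$ and $\xi$ has compact support, the negative part $(w+\varepsilon\xi)^-$ is supported on a set whose measure vanishes as $\varepsilon\to0^+$; the nonlocal energy term and the two integral terms can then be split into contributions from $\{w+\varepsilon\xi\ge0\}$ and $\{w+\varepsilon\xi<0\}$. The plan is to show that the contribution of the set $\{w+\varepsilon\xi<0\}$ is of order $o(\varepsilon)$, so that dividing by $\varepsilon$ and letting $\varepsilon\to0^+$ turns \eqref{s91} into
\begin{align*}
\int_{Q}\frac{|w(x)-w(y)|^{p-2}(w(x)-w(y))(\xi(x)-\xi(y))}{|x-y|^{n+ps}}\,dxdy- \io a(x) w_{+}^{-q}\xi\,dx -\la \io b(x) w_{+}^{r}\xi\,dx \ge 0.
\end{align*}
Since $\xi$ is arbitrary, replacing $\xi$ by $-\xi$ yields the reverse inequality, and hence the identity holds with equality for every $\xi\in C_{X_0}$. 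A density argument (using that $C_{X_0}$ is dense in $\h$, as noted in Section~2, together with the integrability $a(x)w_+^{-q}\in L^1_{loc}$) then extends the equality to all $v\in\h$, which is precisely the definition of a weak solution of $(P_\la^+)$, and therefore of $(P_\la)$.

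The main obstacle will be the careful estimate showing the singular term $\io a(x)w_+^{-q}$ behaves well under the perturbation $\phi=(w+\varepsilon\xi)^+$: near the set where $w$ is small the factor $w_+^{-q}$ blows up, so one must exploit $0<q<1$ together with the already-established fact that $a(x)w_+^{-q}\xi\in L^1(\Om)$ and a dominated/Fatou argument to justify passing the limit $\varepsilon\to0^+$ through the singular integral. In particular I expect to use a difference-quotient bound of the form $|(w+\varepsilon\xi)^{1-q}-w^{1-q}|\le C\varepsilon\,|\xi|\,w^{-q}$ on $\{w>0\}$, controlled by the Hölder pairing in \eqref{e2}, to dominate the integrand uniformly in $\varepsilon$. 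Once this passage to the limit is justified, the remaining pieces—the monotonicity of the $p$-Laplacian kernel giving continuity of the nonlocal term, and the non-singular term $b(x)w_+^r\xi$ handled by the $L^{p^*_s/(p^*_s-1-r)}$ integrability of $b$—are routine.
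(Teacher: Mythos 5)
Your proposal follows essentially the same route as the paper: test the one-sided inequality \eqref{s91} with $\Psi=(w+\e\phi)_+$, decompose over the sets where $w+\e\phi$ is positive and nonpositive, show the bad-set contributions are negligible as $\e\ra 0^+$ (the paper does this via H\"older estimates on the nonlocal pieces together with the fact that $|\{w+\e\phi\le 0\}|\ra 0$), then divide by $\e$, pass to the limit, and use $\pm\phi$ to convert the inequality into the weak formulation. One small correction: the difference-quotient bound $|(w+\e\xi)^{1-q}-w^{1-q}|\le C\e|\xi|w^{-q}$ you anticipate is not needed here, because \eqref{s91} is linear in the test function, so the singular term on the bad set is simply $\int_{\{w+\e\phi\le 0\}} a(x)\,w_+^{-q}(w+\e\phi)\,dx\le 0$ and can be discarded by a sign argument alone.
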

\begin{proof}
Suppose $\phi\in\h$ and $\e>0$, then we define $\Psi(x)= (w+\e\phi)_{+}(x)$. Let $\Om =\Om_1\times\Om_2$ with
\[\Om_1:= \{x\in \Om: w(x)+ \e \phi(x)>0\}\;\mbox{and}\;\Om_2:= \{x\in\Om: w(x)+ \e \phi(x)\leq0\}.\]
Then $\Psi|_{\Om_1}(x)= (w+\e\phi)(x)$, and $\Psi|_{\Om_2}(x)= 0$. Decompose $$Q := (\Om_{1}\times \Om^c)\cup (\Om_{2}\times \Om^c)\cup(\Om^c\times \Om_1)\cup(\Om^c\times \Om_2)\cup(\Om_2\times \Om_1)\cup(\Om_1\times \Om_2)\cup(\Om_1\times \Om_1)\cup(\Om_2\times \Om_2).$$
Let $ M(x,y)= w(x,y)((w+\e\phi)^{-}(x)- (w+\e\phi)^{-}(y))K(x,y)$, where $w(x,y)=|w(x)-w(y)|^{p-2}(w(x)-w(y))$ and $K(x,y)=\frac{1}{|x-y|^{n+ps}}$. Then we have
{\small\begin{enumerate}
\item[1.] $\int_{\Om_1\times \Om^c} M(x,y)dx dy = \int_{\Om^c\times \Om_1} M(x,y)dx dy= 0.$
\item[2.] $\int_{\Om_2\times \Om^c} M(x,y)dx dy = -\int_{\Om_2\times \Om^c}|w(x)|^{p-2} w(x)(w+\e\phi)(x)K(x,y)dxdy.$
\item[4.] $\int_{\Om^c\times \Om_2} M(x,y)dx dy = -\int_{\Om^c\times \Om_2}|w(x)|^{p-2} w(x)(w+\e\phi)(x)K(x,y)dxdy.$
\item[5.] $\int_{\Om_2\times \Om_1} M(x,y)dx dy = - \int_{\Om_2\times \Om_1}w(x,y)(w+\e\phi)(x)K(x,y)dx dy .$
\item[6.] $\int_{\Om_1\times \Om_2} M(x,y)dx dy = - \int_{\Om_1\times \Om_2}w(x,y)(w+\e\phi)(x)K(x,y)dx dy.$
\item[7.] $\int_{\Om_1\times \Om_1} M(x,y)dx dy = 0.$
\item[8.] $\int_{\Om_2\times \Om_2} M(x,y)dx dy = - \int_{\Om_2\times \Om_2}w(x,y)((w+\e\phi)(x)- (w+\e\phi)(y))K(x,y)dx dy.$
\end{enumerate}}
\noi Putting $\Psi$ into \eqref{s9} and using \eqref{s10}, we see that
{\small \begin{align*}
0\leq & \int_{Q}\frac{w(x,y)(\Psi(x)-\Psi(y))}{|x-y|^{n+ps}}dxdy- \io a(x) w_{+}^{-q}\Psi dx - \la \io b(x) w_{+}^{r}\Psi dx\\
=& \int_{Q}\frac{w(x,y)((w+\e\phi)(x)- (w+\e\phi)(y))}{|x-y|^{n+ps}}dxdy- \int_{\Om} a(x) w_{+}^{-q}(w+\e\phi) dx\\
 &- \la \int_{\Om} b(x) w_{+}^{r}(w+\e\phi) dx- \int_{\Om} a(x) w_{+}^{-q}(w+\e\phi)^{-} dx - \la \int_{\Om} b(x) w_{+}^{r}(w+\e\phi)^{-} dx\\
&\quad+\int_{Q}\frac{w(x,y)((w+\e\phi)^{-}(x)- (w+\e\phi)^{-}(y))}{|x-y|^{n+ps}}dxdy\\
=&\e\left(\int_{Q}\frac{w(x,y)(\phi(x)- \phi(y))}{|x-y|^{n+ps}} dxdy - \io a(x) w_{+}^{-q}\phi dx - \la \io b(x) w_{+}^{r}\phi dx\right)- \io a(x) w_{+}^{1-q} dx\\
 &\;+\int_{Q}\frac{|w(x)-w(y)|^{p}}{|x-y|^{n+ps}} dxdy +\int_{Q}\frac{w(x,y)((w+\e\phi)^{-}(x)- (w+\e\phi)^{-}(y))}{|x-y|^{n+ps}}dxdy \\
 &\;+ \int_{\Om_2} a(x) w_{+}^{-q}(w+\e\phi) dx- \la \int_{\Om_2} b(x) w_{+}^{r}(w+\e\phi) dx- \la \io b(x) w_{+}^{1+r} dx \\
=&\e\left(\int_{Q}\frac{w(x,y)(\phi(x)- \phi(y))}{|x-y|^{n+ps}} dxdy - \io a(x) w_{+}^{-q}\phi dx - \la \io b(x) w_{+}^{r}\phi dx\right)+ \int_{\Om_2} a(x) w_{+}^{-q}(w+\e\phi) dx\\
&\;-2\int_{\Om_2\times \Om^c}\frac{|w(x)|^{p-2} w(x)(w+\e\phi)(x)}{|x-y|^{n+ps}}dxdy
- 2\int_{\Om_2\times \Om_1}\frac{w(x,y)(w+\e\phi)(x)}{|x-y|^{n+ps}}dx dy\\
&\; -2\int_{\Om_2\times\Om_2}\frac{w(x,y)((w+\e\phi)(x)- (w+\e\phi)(y))}{|x-y|^{n+ps}}dxdy-  \la \int_{\Om_2} b(x) w_{+}^{r}(w+\e\phi) dx\\
=&\e\left(\int_{Q}\frac{w(x,y)(\phi(x)- \phi(y))}{|x-y|^{n+ps}} dxdy - \io a(x) w_{+}^{-q}\phi dx - \la \io b(x) w_{+}^{r}\phi dx\right)+ \int_{\Om_2} a(x) w_{+}^{-q}(w+\e\phi) dx\\
&-2\int_{\Om_2\times \Om^c}\frac{|w(x)|^{p}}{|x-y|^{n+ps}}dx dy-2\int_{\Om_2\times \Om_1}\frac{w(x,y)w(x)}{|x-y|^{n+ps}}dx dy-2\int_{\Om_2\times \Om_2}\frac{|w(x)-w(y)|^{p}}{|x-y|^{n+ps}}dx dy\\
& - \la \int_{\Om_2} b(x) w_{+}^{r}(w+\e\phi) dx-\e\left(2\int_{\Om_2\times \Om^c}\frac{|w(x)|^{p-2} w(x)\phi(x)}{|x-y|^{n+ps}}dxdy+ 2\int_{\Om_2\times \Om_1}\frac{w(x,y)\phi(x)}{|x-y|^{n+ps}}dx dy\right.\\
&\quad\quad+\left.2\int_{\Om_2\times \Om_2}\frac{w(x,y)(\phi(x)- \phi(y))}{|x-y|^{n+ps}}dx dy\right)\\
\leq&\e \left(\int_{Q}\frac{w(x,y)(\phi(x)- \phi(y))}{|x-y|^{n+ps}} dxdy - \io a(x) w_{+}^{-q}\phi dx - \la \io b(x) w_{+}^{r}\phi dx\right)\\
&-2\int_{\Om_2\times \Om_1}\frac{w(x,y)w(x)}{|x-y|^{n+ps}}dx dy+ \int_{\Om_2} a(x) w_{+}^{-q}(w+\e\phi) dx -2\e\left(\int_{\Om_2\times \Om^c}\frac{|w(x)|^{p-2} w(x)\phi(x)}{|x-y|^{n+ps}}dxdy\right.\\
&\left.+ \int_{\Om_2\times \Om_1}\frac{w(x,y)\phi(x)}{|x-y|^{n+ps}}dx dy
+\ds{\int_{\Om_2\times \Om_2}}\frac{w(x,y)(\phi(x)- \phi(y))}{|x-y|^{n+ps}}dx dy\right) - \la \int_{\Om_2} b(x) w_{+}^{r}(w+\e\phi) dx\\
\leq&\e\left(\int_{Q}\frac{w(x,y)(\phi(x)- \phi(y))}{|x-y|^{n+ps}} dxdy - \io a(x) w_{+}^{-q}\phi dx - \la \io b(x) w_{+}^{r}\phi dx\right)\\
&+2\e\left(\int_{\Om_2\times \Om_1}\frac{|w(x)-w(y)|^{p}}{|x-y|^{n+ps}}dx dy\right)^{\frac{p-1}{p}}\left(\int_{\Om_2\times \Om_1}\frac{|\phi(x)|^{p}}{|x-y|^{n+ps}}dx dy\right)^{\frac{1}{p}}\\
&-2\e\left[\left(\int_{\Om_2\times \Om^c}\frac{|w(x)|^{p}}{|x-y|^{n+ps}}dxdy\right)^{\frac{p-1}{p}}\left(\int_{\Om_2\times \Om^c}\frac{|\phi(x)|^{p}}{|x-y|^{n+ps}}dx dy\right)^{\frac{1}{p}}\right.\\
\end{align*}}
{\small\begin{align*}
&+ \left(\int_{\Om_2\times \Om_1}\frac{|w(x)-w(y)|^{p}}{|x-y|^{n+ps}}dx dy\right)^{\frac{p-1}{p}}\left(\int_{\Om_2\times \Om_1}\frac{|\phi(x)|^{p}}{|x-y|^{n+ps}}dx dy\right)^{\frac{1}{p}}\\
&\quad+\left.\left({\int_{\Om_2\times \Om_2}}\frac{|w(x)-w(y)|^{p}}{|x-y|^{n+ps}}dx dy\right)^{\frac{p-1}{p}}\left(\int_{\Om_2\times \Om_2}\frac{|\phi(x)-\phi(y)|^{p}}{|x-y|^{n+ps}}dx dy\right)^{\frac{1}{p}}\right]\\
&\quad+ \e \la \e^r \|b\|_{L^{\frac{p^*_{s}}{p^*_{s}-r-1}}(\Om_2)} \left(\int_{\Om_2}|\phi|^{p^*_{s}}dx\right)^{\frac{r+1}{p^*_{s}}}-\e \la \int_{\Om_2} b(x) (w_{+}^{r}\phi)(x) dx.
\end{align*}}
Since the measure of the domain of integration $\Om_2 =\{x\in \Om| (w +\e \phi)(x)\leq 0\}$ tend to zero as $\e\ra 0$, it follows that $\int_{\Om_2\times \Om_1}\frac{|\phi(x)|^{p}}{|x-y|^{n+ps}}dxdy\ra 0$ as $\e\ra 0$, and similarly $\int_{\Om_2\times \Om^c}\frac{|\phi(x)|^{p}}{|x-y|^{n+ps}}dx dy$, $\int_{\Om_2\times \Om_2}\frac{|\phi(x)-\phi(y)|^{p}}{|x-y|^{n+ps}}dx dy$, $\la \int_{\Om_2} b(x) w_{+}^{r} \phi dx$ and $\la \e^r\|b\|_{L^{\frac{p^*_{s}}{p^*_{s}-r-1}}(\Om_2)} \left(\int_{\Om_2}|\phi|^{p^*_{s}}dx\right)^{\frac{r+1}{p^*_{s}}}$ all are \\
tend to $0$ as $\e\ra 0$. Dividing by $\e$ and letting $\e\ra 0$, we obtain
{\small \[\int_{Q} \frac{w(x,y) (\phi(x)-\phi(y))}{|x-y|^{n+ps}}dxdy - \io a(x) w_{+}^{-q}\phi dx - \la \io b(x) w_{+}^{r}\phi dx \geq 0\]}
and since this holds equally well for $-\phi$, it follows that $w$ is indeed a positive weak solution of problem $(P_{\la}^{+})$ and hence a positive solution of $(P_{\la})$.\QED
\end{proof}
\begin{Lemma}\label{lm1}
 There exists a minimizing sequence $\{W_k\}$ in $\mc N_{\la}^{-}$ such that $W_k\ra W$ strongly in $\mc N_{\la}^{-}$.
Moreover $W$ is a positive weak solution of $(P_{\la})$.
\end{Lemma}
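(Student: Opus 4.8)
The plan is to run the same minimization scheme used for $\mc N_{\la}^{+}$, now over the set $\mc N_{\la}^{-}$, which is closed by Lemma \ref{le3} and on which $J_\la$ is bounded below and coercive by Lemma \ref{le5}. First I would apply the Ekeland variational principle on $\mc N_{\la}^{-}$ to extract a minimizing sequence $\{W_k\}\subset\mc N_{\la}^{-}$ obeying the analogues of \eqref{c1} and \eqref{c2}, i.e. $J_\la(W_k)<\inf_{\mc N_{\la}^{-}}J_\la+\frac1k$ and $J_\la(W)\ge J_\la(W_k)-\frac1k\|W-W_k\|$ for all $W\in\mc N_{\la}^{-}$. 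Coercivity makes $\{W_k\}$ bounded, so passing to a subsequence $W_k\rightharpoonup W$ weakly in $X_0$, strongly in $L^{\gamma}(\Om)$ for every $\gamma<p^{*}_s$, and a.e. in $\Om$. To see $W\not\equiv0$ I would use that each $W_k\in\mc N_{\la}^{-}$ satisfies $\la(r+q)\io b(x)(W_k)_{+}^{r+1}(x) dx>(p-1+q)\|W_k\|^p$ together with the gap bound $\|W_k\|>A_\la$ of Lemma \ref{le2}; since $r+1<p^{*}_s$ gives $\io b(x)(W_k)_{+}^{r+1}(x) dx\to\io b(x)W_{+}^{r+1}(x) dx$, letting $k\to\infty$ yields $\io b(x)W_{+}^{r+1}(x) dx\ge\frac{p-1+q}{\la(r+q)}A_\la^{p}>0$, so $W\not\equiv0$.

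Next I would establish the sign-reversed analogue of Lemma \ref{le6}: a uniform gap $\la(r+q)\io b(x)(W_k)_{+}^{r+1}(x) dx-(p-1+q)\|W_k\|^p\ge C_2>0$. This is proved exactly as in \eqref{e40} and Lemma \ref{le6}, by assuming the limiting quantity vanishes and deriving the contradiction $0<E_\la\|W\|^{r+1}=0$ from $\la\in(0,\La)$ and $W\not\equiv0$. The only change from the $\mc N_{\la}^{+}$ case is the orientation of the inequality defining $\mc N_{\la}^{-}$, and this gap is precisely what is needed to run the implicit-function/Ekeland estimates.

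With this in hand, I would apply Lemma \ref{le4} to each sequence element $W_k\in\mc N_{\la}^{-}$ (the lemma is stated for $\mc N_{\la}^{\pm}$, and the nonzero derivative $\frac{\partial F}{\partial t}(0,1)=(p-1+q)\|W_k\|^p-\la(r+q)\io b(x)(W_k)_{+}^{r+1}(x) dx\neq0$ validates the construction), producing perturbations $f_k(v)(W_k+v\phi)\in\mc N_{\la}^{-}$. Combining these with the Ekeland inequality and repeating the computations of Lemmas \ref{le7} and \ref{le8} — dividing by $\rho$, letting $\rho\to0^+$, and invoking Fatou's lemma to control the singular term — I would obtain $a(x)W_{+}^{-q}\phi\in L^1(\Om)$ and the variational inequality \eqref{s91} for $W$, exactly as in Corollary \ref{cc1}. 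Testing \eqref{s91} with $\phi=W^-$ forces $W=W^+>0$ a.e. as in Lemma \ref{le9}, and testing with $\phi=W$ gives $\|W\|^p\ge\io a(x)W_{+}^{1-q}(x) dx+\la\io b(x)W_{+}^{r+1}(x) dx$; combined with weak lower semicontinuity of the norm and the Nehari identity $\|W_k\|^p=\io a(x)(W_k)_{+}^{1-q}(x) dx+\la\io b(x)(W_k)_{+}^{r+1}(x) dx$, whose right-hand side converges by the subcritical compactness, this forces $\|W_k\|\to\|W\|$. Hence $W_k\to W$ strongly in $X_0$, and since $\mc N_{\la}^{-}$ is closed (Lemma \ref{le3}) with $\|W\|\ge A_\la>0$, we get $W\in\mc N_{\la}^{-}$ realizing $\inf_{\mc N_{\la}^{-}}J_\la$.

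Finally, the weak-solution property of $W$ follows by the truncation argument of Lemma \ref{le10} applied verbatim: inserting $\Psi=(W+\e\phi)_{+}$ into \eqref{s9}, decomposing $Q$, using that $|\Om_2|\to0$ as $\e\to0$, dividing by $\e$ and letting $\e\to0$ gives the two-sided inequality, so $W$ is a positive weak solution of $(P_\la)$. I expect the main obstacle to be the strong-convergence step: unlike $\mc N_{\la}^{+}$, where the minimizer sits at negative energy, on $\mc N_{\la}^{-}$ one cannot appeal to the sign of the infimum, so both the nonvanishing of $W$ and its membership in $\mc N_{\la}^{-}$ must be extracted from the gap structure of Lemma \ref{le2} and the sign-reversed estimate of Lemma \ref{le6}, all while the non-differentiable singular nonlinearity restricts us to the one-sided Fatou estimates of Lemmas \ref{le7}--\ref{le8}.
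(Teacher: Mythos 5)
Your proposal is correct and takes essentially the same route as the paper: Ekeland's principle on the closed set $\mc N_{\la}^{-}$, the sign-reversed analogue of Lemma \ref{le6} giving a uniform gap, a rerun of Lemmas \ref{le7}--\ref{le10} and Corollary \ref{cc1}, and membership $W\in\mc N_{\la}^{-}$ deduced from the strict reversed inequality via the Nehari identity. Your explicit argument that $W\not\equiv 0$ (from $\|W_k\|>A_{\la}$ in Lemma \ref{le2} plus compact convergence of the $b$-term) supplies a detail the paper leaves implicit, since the negative-infimum argument used on $\mc N_{\la}^{+}$ is indeed unavailable on $\mc N_{\la}^{-}$.
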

\begin{proof}
Using the Ekeland variational principle again, we may find a minimizing sequence $\{W_k\}\subset \mc N_{\la}^{-}$ for the minimizing problem $\inf_{\mc N_{\la}^{-}}J_{\la}$ such that for $W_k\rightharpoonup W$ weakly in $\h$ and pointwise a.e. in $\Om$. We can repeat the argument used in Lemma \ref{le6} to derive that when $\la\in (0,\La)$
\begin{align}\label{s11}
(p-1+q)\io a(x) W_{+}^{1-q}(x)dx -\la(r-p+1)\io b(x) W_{+}^{r+1}(x) dx<0
\end{align}
which yields
\[(p-1+q)\io a(x) (W_k)_{+}^{1-q}(x)dx -\la(r-p+1)\io b(x) (W_k)_{+}^{r+1}(x)dx\leq -C_4\]
for $k$ sufficiently large and a suitable positive constant $C_4$. At this point we may proceed exactly as in Lemmas \ref{le7}, \ref{le8}, \ref{le9}, \ref{le10} and corollary \ref{cc1}, we conclude that $W>0$ is the required positive weak solution of problem $(P_{\la}^{+})$.
In particular $W\in \mc N_{\la}$. Moreover from \eqref{s11} it follows that
\begin{align*}
&(p-1+q)\|W\|^p -\la(r+q)\io b(x) W_{+}^{r+1}(x) dx\\
=&(p-1+q)\left[\io a(x) W_{+}^{1-q}(x)dx +\la\io b(x) W_{+}^{r+1}(x) dx\right]- \la(r+q)\io b(x) W_{+}^{r+1}(x) dx\\
=&(p-1+q)\io a(x) W_{+}^{1-q}(x)dx -\la(r-p+1)\io b(x) W_{+}^{r+1}(x) dx<0,
\end{align*}
that is $W\in\mc N_{\la}^{-}$.\QED
\end{proof}

\noi {\bf Proof of the Theorem \ref{th1}:} From Lemmas \ref{le10}, \ref{lm1} and \ref{le2}, we can conclude that the problem $(P_{\la})$ has at least two positive weak solutions $w\in\mc N_{\la}^{+}$, $W\in\mc N_{\la}^{-}$ with $\|W\|>\|w\|$ for any $\la\in(0,\La)$. \QED


\noi {\bf Proof of the Theorem \ref{co1}:}
For any $W\in \mc N_{\la}^{-}$, it follows from Lemma \ref{le2} that
\begin{align*}
\|W\|&> A_{\la} = \La^{\frac{-1}{r-p+1}} \left(\frac{p-1+q}{r+q}\right)^{\frac{1}{r-p+1}}\left(\frac{1}{\|b\|}\right)^{\frac{1}{r-p+1}} (\sqrt[p]{S})^{\frac{r+1}{r-p+1}} \left(\frac{\La}{\la}\right)^{\frac{1}{r-p+1}}.
\end{align*}
Thus by the definition of $\La$, and using $\frac{p(r+q)}{(p-1+q)(r-p+1)}-\frac{r+1}{r-p+1}= \frac{1-q}{p-1+q}$, we obtain,
\begin{align*}
\|W\|&> \left(1+\frac{p-1+q}{r-p+1}\right)^{\frac{1}{p-1+q}}\|a\|^{\frac{1}{p-1+q}} \left(\frac{1}{\sqrt[p]{S}}\right)^{\frac{1-q}{p-1+q}} \left(\frac{\La}{\la}\right)^{\frac{1}{r-p+1}}.
\end{align*}
Hence, let $W_{\e}\in \mc N_{\la}^{-}$ be the solution of problem $(P_{\la})$ with $r= p-1+\e$, where $\la\in (0,\La)$, we have
\begin{align*}
\|W\|> C_{\e}\left(\frac{\La}{\la}\right)^{\frac{1}{\e}}
\end{align*}
where $C_{\e}= \left(1+\frac{p-1+q}{\e}\right)^{\frac{1}{p-1+q}}\|a\|^{\frac{1}{p-1+q}} \left(\frac{1}{\sqrt[p]{S}}\right)^{\frac{1-q}{p-1+q}}\ra \infty$ as $\e\ra 0^+$.
This completes the proof.\QED


\noindent{\bf Acknowledgements:} The author's research is supported
by National Board for Higher Mathematics, Govt. of India, grant
number: 2/40(2)/2015/R$\&$D-II/5488.



\begin{thebibliography}{11}
\footnotesize
%

\bibitem{AJ} Adimurthi and J. Giacomoni, {\it Multiplicity of positive solutions for a singular and critical elliptic problem in $\mathbb R^2$}, Comm. in Contemporary Mathematics, 8(5) (2006) 621-656.

\bibitem{Am} Ahmed Mohammed, {\it Positive solutions of the $p$-Laplace equation with singular nonlinearity }, J. Math.  Anal. Appl., 352 (2009) 234-245.

\bibitem{da} D. Applebaum, {\it L\`{e}vy process-from probability to finance and quantum groups}, Notices Amer. Math. Soc., 51 (2004) 1336-1347.

\bibitem{bb} B. Barrios, I. D. Bonis, M. Medina, and I. Peral, {\it Semilinear problems for the fractional
Laplacian with a singular nonlinearity}, Open Math.,  13 (2015) 390-407.

 \bibitem{bc} B. Br\"{a}ndle and E. Colorado, A. de pablo and U. S\`{a}nchez, {\it A concave-convex elliptic problem involving the fractional Laplacian}, Proc. Roy. Soc. Edinburgh. Sect. A, 143 (2013) 39-71.

\bibitem{bp} L. Brasco and E. Parini, {\it The second eigenvalue of the fractional $p$-Laplacian}, Adv. Calc. Var., to appear.
http://dx.doi.org/10.1515/acv-2015-0007. 1, 6.

\bibitem{cs} L. Caffarelli and L. Silvestre, {\it An extension problem related to the fractional Laplacian}, Comm. in Partial Differential Equations, 32 (2007) 1245-1260.

\bibitem{ct} X. Cabr\'{e} and J. Tan, {\it Positive solutions for nonlinear problems involving the square root of the Laplacian}, Adv. Math., 224 (2010) 2052-2093.

\bibitem{yc} Y. Chen and J. Chen, {\it Existence of multiple positive weak solutions and estimates for extremal values to a class of elliptic problems with Hardy term and singular nonlinearity}, J. Math. Anal. Appl., 429 (2015) 873-900.

\bibitem{cr} M. G. Crandall, P. H. Rabinowitz and L. Tartar, {\it On a Dirichlet problem with a singular nonlinearity}, Comm. Partial Differential
Equations, 2 (1977) 193-222.

\bibitem{cp} M. M. Coclite and G. Palmieri, {\it On a singular nonlinear Dirichlet problem}, Comm. Partial Differential Equations,  14 (10) (1989) 1315-1327.

\bibitem{dh} J.I. D\'{i}az, J. Hern\'{a}ndez and F. Mancebo, {\it Branches of positive and free boundary solutions for some singular quasilinear elliptic problems}, J. Math. Anal. Appl., 352 (2009) 449-474.

 \bibitem{dhr} J. I. D\'{i}az, J. Hern\'{a}ndez and J. M. Rakotoson, {\it On very weak positive solutions to some semilinear elliptic problems wtth simultaneous singular nonlinear and spatial dependence terms}, Milan J. Math., 79 (2011) 233-245.

\bibitem{gr} M. Ghergu and V. R\v{a}dulescu, {\it Sublinear singular elliptic problems with two parameters}, J. Differential Equations 195 (2003) 520-536.

\bibitem{gr1} M. Ghergu and V. R\v{a}dulescu, {\it Bifurcation for a class of singular elliptic problems with quadratic convection term}, C. R. Acad. Sci. Paris Sér. I, 338 (2004) 831-836.


\bibitem{gf} Giovanni Franzina and Giampiero Palatucci, {\it Fractional $p$-eigenvalues}, To appear in Riv. Mat. Univ. Parma (N.S.) available
at http://arxiv.org/pdf/1307.1789v1.pdf.

\bibitem{fa} Y. Fang, {\it Existence, uniqueness of positive solution to a fractional laplacians with singular
non linearity}, arXiv:1403.3149v1. (2014).

\bibitem{gm} A. Garroni and S. M\"{u}ller, {\it G-limit of a phase-field model of dislocations}, SIMA J. Math. Anal., 36 (2005) 1943-1964.


\bibitem{JT} Jagmohan Tyagi, {\it Existence of nontrivial solutions for singular quasilinear equations with sign changing nonlinearity}, Electronic Journal of Differential Equations, 117 (2010) 1-9.

\bibitem{ssa} S. Goyal and K. Sreenadh, {\it Existence of multiple solutions of $p$-fractional Laplace operator with sign-changing weight function,}
 Adv. Nonlinear Anal., 4 (1) (2015) 37-58.

\bibitem{ssi} S. Goyal and K. Sreenadh, {\it The Nehari manifold for non-local elliptic operator with concave-convex nonlinearities and sign-changing weight functions}, Proc. Indian Acad. Sci. Math. Sci., PMSC-D-13-00460.	

\bibitem{hss} N. Hirano, C. Saccon and N. Shioji, {\it Existence of multiple positive solutions for singular elliptic problems with concave and convex nonlinearities}, Adv. Differential Equations, 9 (2) (2004) 197-220.

\bibitem{hnc} N. Hirano, C. Saccon and N. Shioji, {\it Brezis-Nirenberg type theorems and multiplicity of positive solutions for a singular elliptic problem,} {J. Differential Equations}, 245 (2008) 1997-2037.

\bibitem{ls} A. Iannizzotto and M. Squassina, {\it Weyl-type laws for fractional $p$-eigenvalue problems}, Asymptot. Anal., 88 (2014),
233-245.

\bibitem{lms} A. Iannizzotto, S. Mosconi and M. Squassina, {\it Global H\"{o}lder regularity for the fractional $p$-Laplacian}, preprint (2014),
http://arxiv.org/abs/1411.2956.

\bibitem{lm} A. C. Lazer and P. J. McKenna, {\it On a Singular nonlinear elliptic boundary-value problem}, Proc. of Amer. Math.
Soc., 111 (3) (1991) 721-730.

\bibitem{lmp} A. C. Lazer and P. J. McKenna, {\it On Singular boundary value problems for the Monge-Ampère operator}, J. Math. Anal. Appl., 197 (1996) 341-362.

\bibitem{ll} E. Lindgren and P. Lindqvist, {\it Fractional eigenvalues}, Calc. Var. Partial Differential Equations, 49 (2014), 795-826.

\bibitem{ms} S. Mosconi, K. Perera, M. Squassina and Y. Yang, {\it The Brezis-Nirenberg problem for the fractional
$p$-Laplacian}, preprint http://arxiv.org/abs/1508.00700. 3, 10.

\bibitem{hi} E. Di Nezza, G. Palatucci and E. Valdinoci, {\it Hitchhiker's guide to the fractional Sobolev spaces}, Bull. Sci. Math., 136 (2012)
225-236.

\bibitem{mp} R. Servadei and E. Valdinoci, {\it Mountain pass solutions
for non-local elliptic operators,} J. Math. Anal. Appl., 389 (2012) 887-898.

\bibitem{var} R. Servadei and E. Valdinoci, {\it Variational methods for non-local operators of elliptic type},
 Discrete Contin. Dyn. Syst., 33  no. 5 (2013) 2105-2137.

\bibitem{bn1} R. Servadei and E. Valdinoci, {\it A Brezis Nirenberg result for non-local critical equations in low dimension,} Commun. Pure Appl. Anal., 12 (6) (2013) 2445-2464.

\bibitem{bn} R. Servadei and E. Valdinoci, {\it The Brezis Nirenberg result for the fractional Laplacian}, Trans. Amer. Math. Soc., 367 (2015) 67-102.

\bibitem{xy} Xiaohui Yu, {\it The Nehari manifold for elliptic equation involving the square root of the Laplacian}, J.
Differential Equations, 252 (2012) 1283-1308.

\bibitem{zlj} J. Zhang, X. Liu and H. Jiao, {\it Multiplicity of positive solutions for a fractional Laplacian equations involving critical nonlinearity}, arXiv:1502.02222v1 [math.AP].
















\end{thebibliography}
\end{document}